\newcommand{\surface}{S}
\newcommand{\R}{\mathbb R}
\newcommand{\Z}{\mathbb Z}
\newcommand{\N}{\mathbb N}
\newcommand{\edot}{\,\cdot\,}
\newcommand{\coloneqq}{:=}
\newcommand{\Kx}{u}
\newcommand{\Kxlow}{u_0}
\newcommand{\Nx}{N_x}
\newcommand{\kx}{\bar{u}}
\newcommand{\ky}{v}
\newcommand{\kylow}{v_0}
\newcommand{\xxi}{\xi}
\newcommand{\xx}{x}
\newcommand{\zz}{z}
\newcommand{\sph}{\mathbb S}
\newcommand{\conv}[1]{\boldsymbol#1}
\DeclareMathOperator{\Ro}{\mathbf{R}}
\DeclareMathOperator{\soft}{soft}
\DeclareMathOperator{\sign}{sign}
\newcommand{\Wo}{\mathbf{W}}
\newcommand{\Ao}{\mathbf{A}}
\newcommand{\Bo}{\mathbf{B}}
\newcommand{\Fo}{\mathbf{F} }
\newcommand{\Do}{\mathbf{D} }
\newcommand{\So}{\mathbf{S} }
\newcommand{\To}{\mathbf{T}}
\newcommand{\Io}{\mathbf{I}}
\newcommand{\herm}{*}
\newcommand{\dd}{ \mathrm{d}  }
\newcommand{\plus}{{\texttt{+}}}
\newcommand{\astx}{\circledast_x}
\newcommand{\astt}{\circledast_t}
\newcommand{\low}{{0}}
\newcommand{\X}{\mathbb X}
\newcommand\abs[1]{\left\vert#1\right\vert}
\newcommand\norm[1]{\left\Vert#1\right\Vert}
\newcommand\inner[2]{\left\langle#1,#2\right\rangle}
\newcommand\set[1]{\bigl\{#1\bigr\}}
\newcommand{\enorm}{\left\|\;\cdot\;\right\|}
\newcommand{\La}{\Lambda}
\newcommand{\la}{\lambda}
\newcommand{\data}{y}
\newcommand{\signal}{f}
\newcommand{\kl}[1]{\left(#1\right)}
\newtheorem{theorem}{Theorem}
\newtheorem{alg}[theorem]{Algorithm}
\newtheorem{lemma}[theorem]{Lemma}
\newtheorem{proposition}[theorem]{Proposition}
\newtheorem{corollary}[theorem]{Corollary}
\theoremstyle{definition}
\newtheorem{definition}[theorem]{Definition}
\newtheorem{remark}[theorem]{Remark}
\numberwithin{equation}{section}
\numberwithin{figure}{section}
\title{Multi-Scale Factorization of the Wave Equation with Application to Compressed Sensing Photoacoustic Tomography}
\author{Gerhard Zangerl}
\author{Markus Haltmeier}
\affil{Department of Mathematics, University of Innsbruck\\
Technikestra{\ss}e 13, 6020 Innsbruck, Austria\\
E-mail: {\tt \{gerhard.zangerl,markus.haltmeier\}@uibk.ac.at}}
\begin{document}

\maketitle

\begin{abstract}
Performing a large number of spatial measurements enables high-resolution photoacoustic imaging without specific prior information.  However, the acquisition of spatial measurements is time-consuming, costly, and technically challenging. By exploiting nonlinear prior information, compressed sensing techniques in combination with sophisticated reconstruction algorithms allow reducing the number of measurements while maintaining high spatial resolution. To this end, in this work we propose a multiscale factorization for the wave equation that decomposes the measured data into a low-frequency factor and sparse high-frequency factors. By extending the acoustic reciprocity principle, we transfer sparsity in the measurement domain into spatial sparsity of the initial pressure, which allows the use of sparse reconstruction techniques. Numerical results are presented that demonstrate the feasibility of the proposed framework.

\medskip\noindent \textbf{Keywords:}
photoacoustic tomography, image reconstruction, limited data, wave equation, cost reduction, compressed sensing,  multiscale factorization.

\medskip\noindent \textbf{AMS:}  
45Q05, 65T60, 94A08, 92C55
\end{abstract}

\section{Introduction}

Photoacoustic tomography (PAT) is an emerging imaging technique that combines the high resolution of ultrasound imaging with the high contrast of optical tomography \cite{xu2006photoacoustic}. As illustrated in Figure~\ref{fig:PATPrinciple}, in PAT a semi-transparent sample is illuminated by short pulses of optical energy, which induces an acoustic pressure wave $p \colon \R^3 \times [0, \infty) \to \R $, which depends on spatial position $\xx \in \R^3$ and time $t \geq 0$. The initial pressure distribution $\signal \colon \R^3 \to \R$ is proportional to the internal light absorption characteristics of the sample and provides valuable diagnostic information. Detectors located on a measurement surface $S$ that (partially) surrounds the sample measure the acoustic pressure from which the initial pressure distribution is recovered. Throughout the following, we denote by $\Wo \signal \coloneqq p|_{S \times [0, \infty)}$ the restriction of the acoustic pressure to the measurement surface.

Exact reconstruction formulas for recovering the initial pressure are available for complete data for specific surfaces $S$, see for example, \cite{dreier2020explicit,finch2007spherical,finch2004determining,haltmeier2014universal,kunyansky2011reconstruction,kunyansky2015inversion,nguyen2009family}. Efficient reconstruction schemes in PAT that take into account acoustic attenuation or variable speed of sound have also been developed \cite{Acosta,agranovsky2007uniqueness,ammari2011time,haltmeier2017analysis,huang2013full,kowar2014time,kowar2012photoacoustic,stefanov2009thermo}. Different types of detectors such as linear or circular detectors recording integrals of the acoustic pressure have been investigated  \cite{burgholzer2007temporal,zangerl2009exact}.  In this paper, we consider the constant sound speed case and address the issue of compressed sensing to reduce the amount of data while maintaining high spatial resolution.

\begin{psfrags}
\psfrag{S}{$S$}
\begin{figure}[htb!]
    \center
    \includegraphics[width=\textwidth]{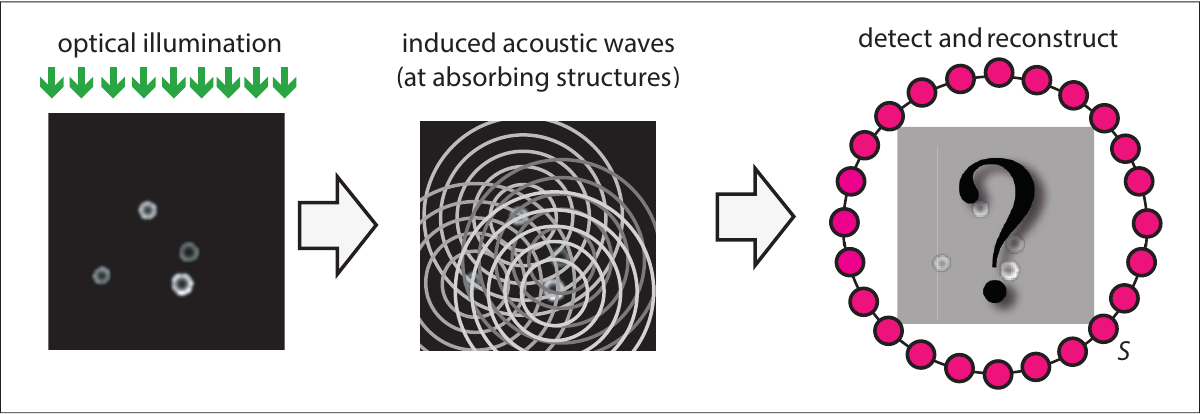}
    \caption{\textsc{Basic principles of PAT.} Left: A sample object is illuminated with short optical pulses. Middle: The optical energy is absorbed in the sample, causing inhomogeneous heating and inducing a subsequent acoustic pressure wave. Right: Acoustic sensors located outside the sample detect the pressure signals, from which an image of the interior is generated. In this work, we develop a specific compressed sensing method that allows to reduce the number of spatial measurements while maintaining a high spatial resolution.} \label{fig:PATPrinciple}
\end{figure}
\end{psfrags}

 \subsection{Compressed sensing PAT}

Acoustic signals in PAT offer a large bandwidth. Therefore, high spatial resolution can be achieved by collecting a sufficiently large number of measurements \cite{haltmeier2016sampling}, as predicted by Shannon's sampling theorem. In practice, however, collecting a large number of spatial measurements requires either a large number of parallel data acquisition channels or a large number of sequential measurements. This either increases the cost and technical complexity of the system or significantly increases measurement time.  Several approaches have been proposed to speed up data acquisition in PAT. For example, a phase contrast method was developed in \cite{nuster2010full} where a reconstruction of the initial pressure can be obtained from projections of the acoustic pressure which can be collected rapidly. In the present work, we use compressed sensing techniques \cite{candes2006stable,donoho2006compressed,foucart2017mathematical,grasmair2011necessary} to reduce the number of measurements in PAT while maintaining high spatial resolution. One of the main challenges in compressed sensing is the development of sophisticated image reconstruction algorithms. In PAT, such CS techniques have been developed in \cite{haltmeier2016compressed,haltmeier2018sparsification,arridge2016accelerated}. Here, we develop a novel image reconstruction strategy based on multiscale factorization of the wave equation that is universally applicable to compressed sensing PAT (CSPAT).

Compressed sensing reconstruction techniques rely on sparsity of the signals to be reconstructed. In PAT, one feasible approach is to express the initial pressure in an appropriate basis. The use of such a strategy leads to a coupled forward model whose solution can be numerically challenging. As an alternative, strategies that apply a time-domain transformation to sparsify PAT data have been proposed in \cite{sandbichler2015novel,haltmeier2016compressed}.  These methods have been demonstrated to reconstruct the high-frequency content of the original pressure very accurately from a significantly reduced set of measurements. However, the proposed differential operators used as sparsifying transforms suppress low-frequency information, resulting in low-frequency artifacts in the reconstruction. In addition to sparsity, the second main component of compressed-sensing reconstruction involves conditions for the subsampled forward matrix that enable linear convergence rates. Necessary and sufficient conditions in a general Hilbert space framework have been derived in \cite{grasmair2011necessary}.  Stable uniform recovery of all sufficiently sparse elements is commonly based on the restricted isometry property \cite{candes2008restricted}.  While in this paper we focus on the sparsity issue, recovery conditions in the context of PAT are briefly discussed in Subsection~\ref{ssec:MSrecon}.

 \subsection{Main Contributions}

In this work, we develop the concept of multiscale time transforms and multiscale factorization for CSPAT. We apply multiscale transforms in the time domain, which split the data into a low-frequency component and several high-frequency components. The basic idea of the proposed reconstruction scheme is to use the acoustic reciprocity principle to show that there is a one-to-one correspondence between the transformed data in the time domain and spatially transformed initial pressure. This factorization allows the use of sparse recovery techniques for the high-frequency part of the initial pressure, while the low-frequency part can be inverted using standard methods.

To be more specific, for a mother wavelet function $\psi \colon \R \to \R$ we set $\ky_j(t) = 2^{j}  \psi(2^j t)$ for $j \geq 1$ and denote by $\ky_0$ a function that contains the missing low-frequency content. This could be, for example, the associated scaling function of the mother wavelet. We then explicitly derive associated functions $\Kx_j \colon \R^2 \to \R$ such that for any initial pressure we have the reciprocal relation
\begin{equation}\label{eq:wavscale}
	\Wo ( \signal \astx \Kx_j )  = \ky_j \astt  (\Wo  \signal )  \quad \text{ for all }  j \in \N  \,.
\end{equation}
The latter identity is then used with compressed sensing data in place of classical data $\Wo \signal$.  Based on \eqref{eq:wavscale}, we develop a reconstruction strategy to recover a multiscale decomposition of the initial pressure consisting of several sparse high-frequency parts and a smooth version of the initial pressure distribution $\signal$. Figure \eqref{fig:sparse} shows a phantom $\signal$, pressure data $\Wo \signal$, and the corresponding multiscale factors $\ky_j \astt (\Wo \signal )$ (top) and $\signal \astx \Kx_j$ (bottom).

 \begin{figure}[htb!]
\includegraphics[width=0.24\textwidth]{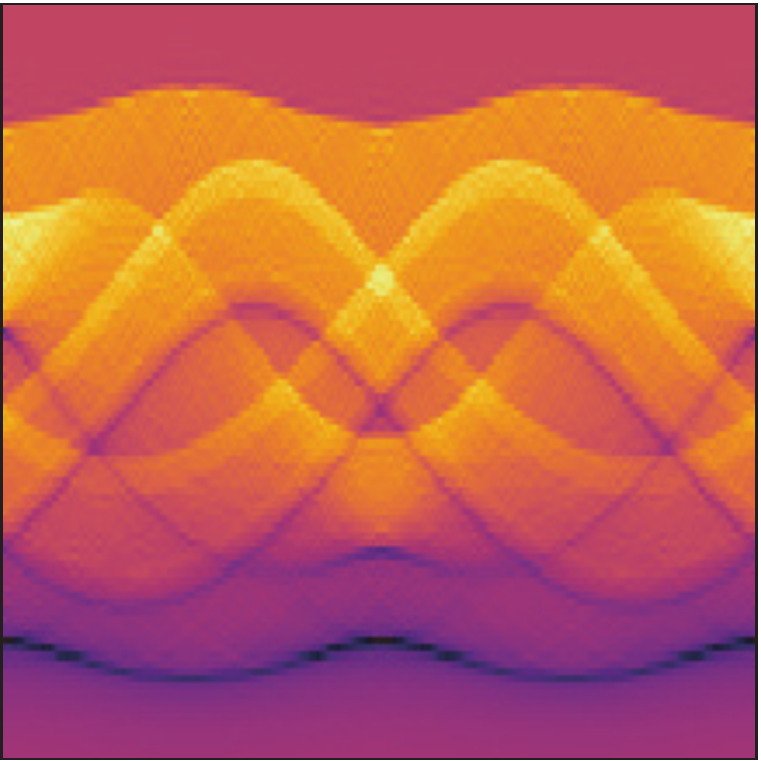}
\includegraphics[width=0.24\textwidth]{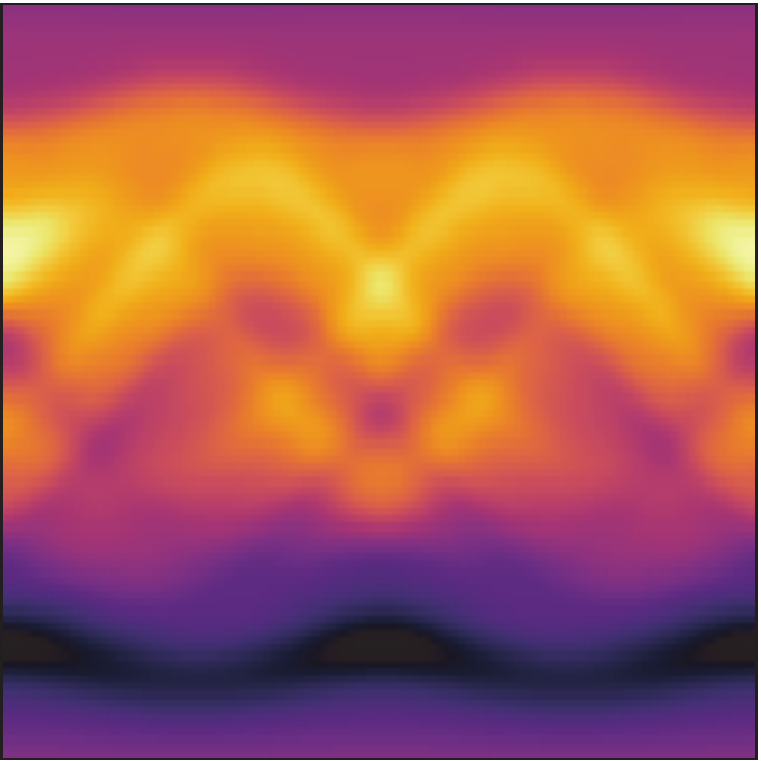}
\includegraphics[width=0.24\textwidth]{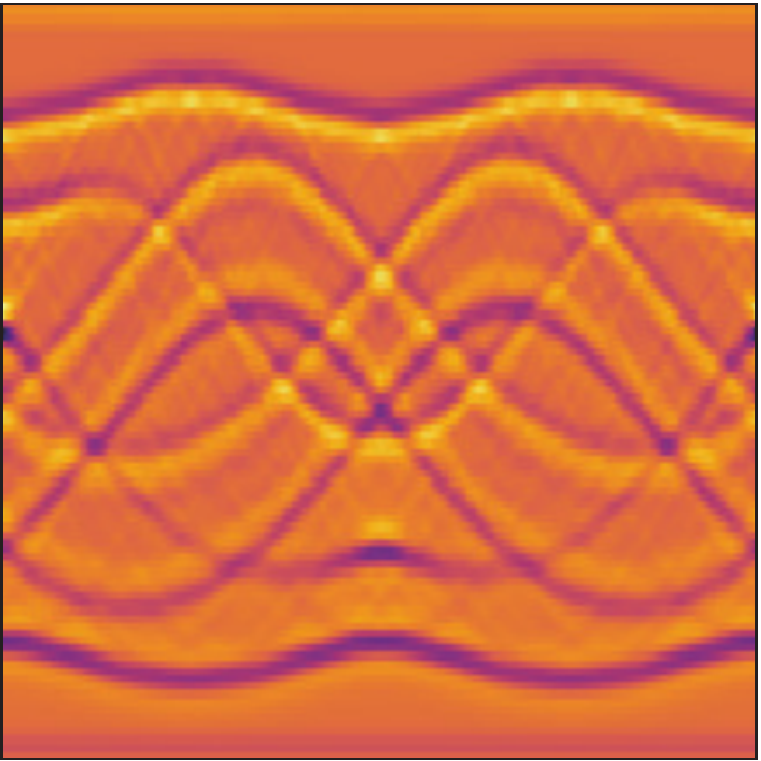}
\includegraphics[width=0.24\textwidth]{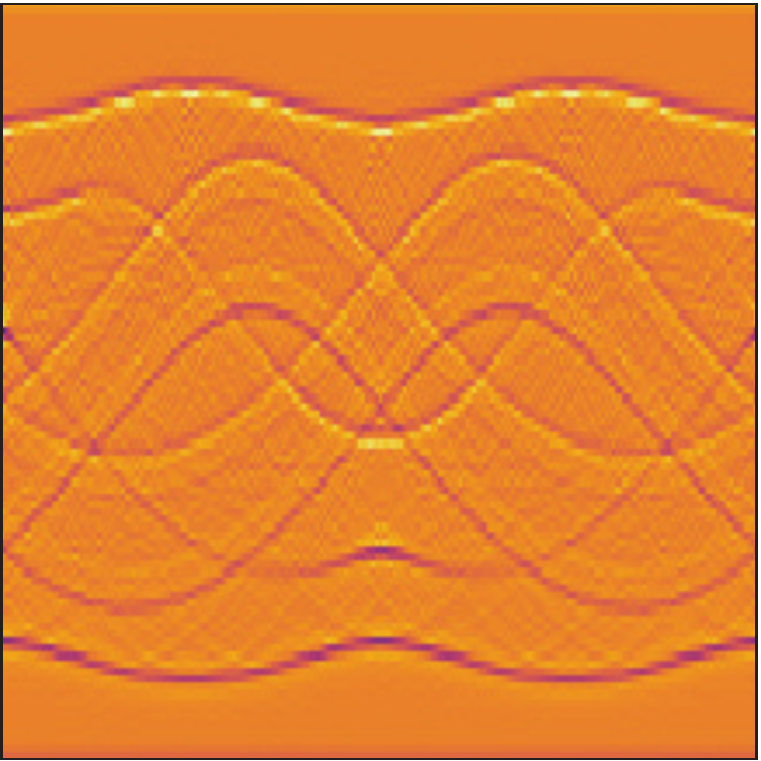}\\[0.1cm]
\includegraphics[width=0.24\textwidth]{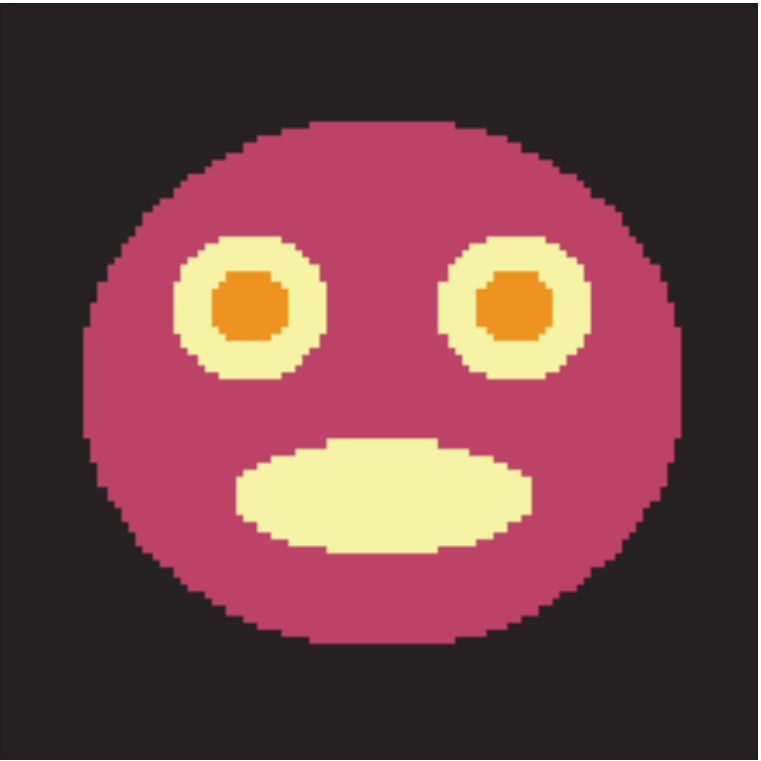}
\includegraphics[width=0.24\textwidth]{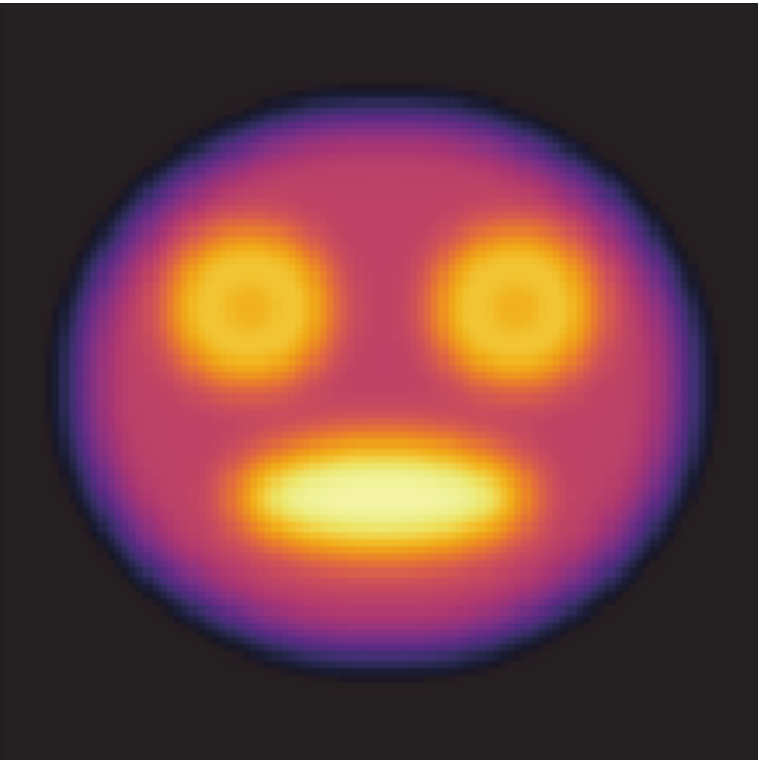}
\includegraphics[width=0.24\textwidth]{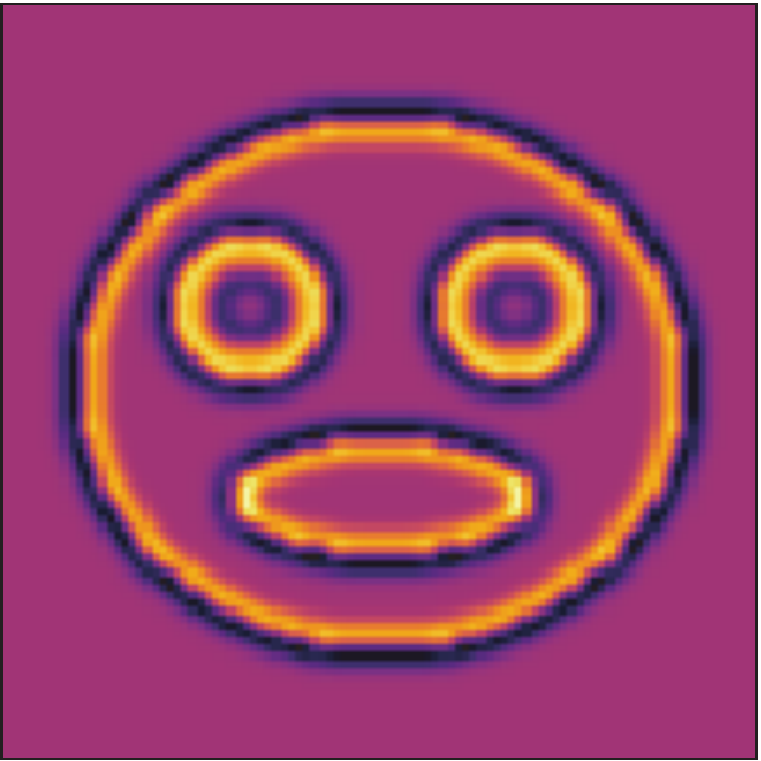}
\includegraphics[width=0.24\textwidth]{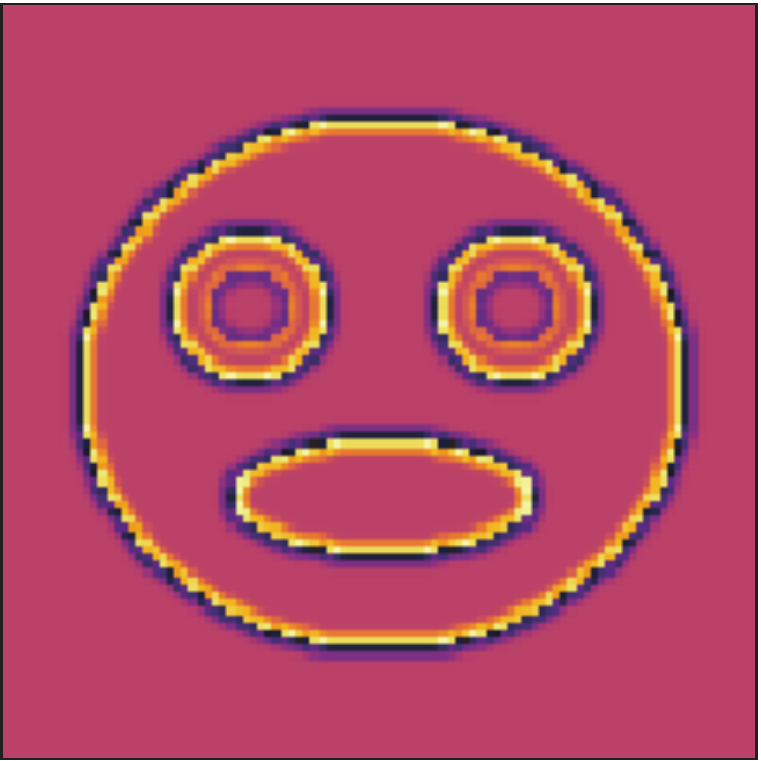}
\caption{Top: Data $\Wo \signal$ (left) and convolved pressure data $\ky_j \astt \Wo \signal$ on the three lowest scales. Bottom: Initial pressure $\signal$ and convolved initial pressure $ \signal \astx \Kx_j$ on the same scales. According to the acoustic reciprocal principle \eqref{eq:wavscale}, the convolved pressure data belong to the convolved initial data for which we have sparsity, and for which we use compressed sensing reconstruction.}
\label{fig:sparse}
\end{figure}

The concept of sparsifying temporal transforms for CSPAT was initially developed in \cite{sandbichler2015novel,haltmeier2016compressed} in two and three spatial dimensions.  These earlier approaches use one-dimensional sparsifying transforms which filter out low-frequency components, thereby leading to low-frequency artifacts in the reconstruction. By considering an additional low-frequency component as well, the proposed multiscale scheme naturally overcomes this drawback of a single multiscale transform. Another solution concerning the missing low-frequency component was proposed in \cite{haltmeier2018sparsification}, where we suggested jointly reconstructing the original pressure and a sparsified version based on the second time derivative. However, the present approach seems more natural and more accessible to a rigorous mathematical analysis.

 \subsection{Outline}

The remainder of the paper is organized as follows. In Section~\ref{sec:prelim} we provide required background of PAT and derive a dual form of the acoustic reciprocal principle.  In Section~\ref{sec:rec} we derive a multiscale factorization for the wave equation. The application to CSPAT is presented in Section~\ref{sec:cspat}. Numerical examples are presented in Section~\ref{sec:num}. The paper ends with a brief summary and outlook in Section~\ref{sec:conclusion}.

\section{Photoacoustic tomography}
\label{sec:prelim}

In this section, we provide the background required from PAT and derive a reciprocal version of the acoustic reciprocity principle that will be useful for our later analysis.

\subsection{Wave equation model}

Throughout, we consider PAT with constant speed of sound. The acoustic pressure is modeled as a function $p \colon \R^d \times [0, \infty) \to \R$, which satisfies the following initial value problem for the wave equation
\begin{alignat}{2}\label{eq:wavee1}
p_{tt} \left( \xx, t \right)  - \Delta p \left( \xx, t \right) &= 0   \quad && ~\text{for}~  (\xx, t )  \in \R^d  \times   \left(0, \infty \right) \,,\\ \label{eq:wavee2}
p    \left( \xx , 0 \right) &=  \signal \left(\xx\right)  \quad   &&~\text{for} ~  \xx  \in \R^d  \,, \\ \label{eq:wavee3}
p_t\left(\xx, 0 \right)     &= 0           \quad              &&~\text{for} ~  \xx  \in \R^d \,.
\end{alignat}
Here $\signal \in C^\infty (\R^d)$ is the initial pressure distribution, which for simplicity we assume to be a smooth function. Notice that in the actual application of PAT the cases $d=2$ and $d=3$ spatial dimensions are relevant \cite{burgholzer2007temporal,finch2007inversion,finch2004determining,kuchment2008mathematics,xu2006photoacoustic}.

Continuous PAT data consist of time-resolved acoustic pressure restricted in space to a smooth detection surface $S \subseteq \R^d$. The continuous domain PAT forward operator is given by
\begin{equation}\label{eq:Wo}
	\Wo \colon C^\infty (\R^3) \to C^\infty ( S \times (0,\infty) ) \colon \signal \mapsto \Wo \signal = p \big|_{S \times (0,\infty)} \,.
\end{equation}
The corresponding complete data inverse problem is to solve the operator equation $\Wo \signal = g$ from possible noisy information. In the last two decades, many methods, including exact reconstruction formulas and iterative methods for different geometries, variable and constant sound speed, and different detector types have been developed; see~\cite{poudel2019survey} for a recent review.
Clearly, only discrete data can be collected in practice. We will present both standard discrete sampling and compressed sensing strategies in Section~\ref{sec:cspat}.

\subsection{Acoustic reciprocal principle}\label{sec:acrep}

Compressed sensing reconstruction techniques are typically based on sparsity of the unknown signals to be reconstructed. To achieve sparsity in PAT, we use the acoustic reciprocity principle in combination with sparse temporal transformations. The acoustic reciprocal principle states that the time-domain manipulation of photoacoustic data corresponds to a spatial convolution of the initial pressure with a radial function. An explicit form of the acoustic reciprocal principle has been proved first for three spatial dimensions in \cite{haltmeier2010spatial} and extended to arbitrary dimensions in \cite{haltmeier2011mollification}.

\begin{proposition}[Acoustic reciprocal principle of \cite{haltmeier2011mollification}]\label{lem:reci}
Let $\Kx \in L^1(\R^d)$ be a compactly supported  radial function of the form $\Kx =  \kx \left( \norm{\edot}  \right)$ and set
\begin{equation}\label{eq:rad}
\Ro \Kx \colon \R \to \R  \colon t \mapsto
\begin{cases}
\kx(t)  & \text{ if }   d = 1 \\
\omega_{d-2}  \int_{|t|}^\infty \kx(s)      \left( s^2 - t^2 \right)^{(d-3)/2}  s \dd s & \text{ if }  d>1 \,,
\end{cases}
\end{equation}
where  $\omega_{d-2}$ denotes the volume of the $(d-2)$-dimensional  unit sphere $\sph^{d-2}$.
Then, for every $\signal \in C^\infty (\R^d)$ we have
\begin{align}\label{eq:reci}
\forall (\xx, t) \in \R^d \times (0, \infty) \colon \quad   \Wo \left(  \Kx \astx \signal   \right)(\xx, t) =  \left( (\Ro \Kx) \astt  \Wo \signal\right)(\xx, t ) \,.
\end{align}
Here  $\astx$ denotes  the spatial convolution in $\R^d$ and  $\astt$ denotes the one-dimensional  convolution applied in the second component.
\end{proposition}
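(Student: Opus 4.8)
The plan is to prove the identity for every $\xx \in \R^d$ (not only on $S$) and then restrict to the detection surface. Write $p(\xx,t) = [\cos(t\sqrt{-\Delta})\signal](\xx)$ for the solution of \eqref{eq:wavee1}--\eqref{eq:wavee3}, extended to an even function of $t$ as dictated by the vanishing initial velocity. Because spatial convolution with $\Kx$ commutes with the constant-coefficient operator $\partial_t^2-\Delta$ and is compatible with the initial conditions, the function $(\xx,t)\mapsto (\Kx\astx p(\edot,t))(\xx)$ solves the wave equation with initial data $\Kx\astx\signal$ and zero initial velocity; by uniqueness it agrees with $\Wo(\Kx\astx\signal)$ after restriction to $S\times(0,\infty)$. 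Hence it suffices to establish, for all $\xx\in\R^d$ and $t>0$, the pointwise relation $(\Kx\astx p(\edot,t))(\xx) = ((\Ro\Kx)\astt p)(\xx,t)$, after which restriction to $\xx\in S$ yields \eqref{eq:reci}.

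First I would pass to the spatial Fourier transform. Writing $r=\norm{\xi}$ and using that $\widehat{\Kx}$ is radial (say $\widehat{\Kx}(\xi)=m(r)$, since $\Kx$ is radial), the left-hand side transforms to $m(r)\cos(tr)\widehat{\signal}(\xi)$, whereas the right-hand side transforms to $\left(\int_\R (\Ro\Kx)(t-s)\cos(sr)\,\dd s\right)\widehat{\signal}(\xi)$. The whole statement therefore collapses to the scalar identity $m(r)\cos(tr)=\int_\R(\Ro\Kx)(t-s)\cos(sr)\,\dd s$ for each $r\ge 0$ and $t$. To evaluate this cosine-convolution I would expand $\cos(sr)=\cos((s-t)r)\cos(tr)-\sin((s-t)r)\sin(tr)$, substitute $u=s-t$, and use that $\Ro\Kx$ is even: the sine term integrates to zero and the remaining integral is the one-dimensional Fourier (cosine) transform $\int_\R(\Ro\Kx)(u)\cos(ur)\,\dd u=\widehat{\Ro\Kx}(r)$. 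Thus the scalar identity reduces to the single equality $m(r)=\widehat{\Ro\Kx}(r)$: the $d$-dimensional Fourier transform of the radial function $\Kx$, read off as a function of the radius $r$, must equal the one-dimensional Fourier transform of $\Ro\Kx$.

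The heart of the argument — and the step I expect to be the main obstacle — is recognizing $\Ro$ as the Radon (hyperplane) transform of a radial function and then invoking the Fourier slice theorem. Concretely, for a unit direction $\theta$ one parametrizes the hyperplane $\{\xx:\theta\cdot\xx=t\}$ by $\xx=t\theta+y$ with $y\in\theta^\perp\cong\R^{d-1}$, so that $\norm{\xx}=\sqrt{t^2+\norm{y}^2}$; integrating the radial profile $\kx$ over this hyperplane, converting to polar coordinates in $\R^{d-1}$ (which produces the factor $\omega_{d-2}$), and substituting $s=\sqrt{t^2+\norm{y}^2}$ reproduces exactly the formula \eqref{eq:rad} for $\Ro\Kx$, for every direction $\theta$. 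Hence $\Ro\Kx$ is the (direction-independent) Radon transform of $\Kx$, and the Fourier slice theorem — the one-dimensional Fourier transform of a Radon projection equals the central slice of the $d$-dimensional Fourier transform along that direction — yields $\widehat{\Ro\Kx}(r)=\widehat{\Kx}(r\theta)=m(r)$, which is precisely what remained to be shown.

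Finally I would attend to the technical hypotheses that make these formal manipulations rigorous: compact support together with $\Kx\in L^1(\R^d)$ guarantee that $\Ro\Kx$ is compactly supported and lies in $L^1(\R)$, so that the cosine transform and the temporal convolution are well defined even in the borderline case $d=2$ with its mildly singular weight $(s^2-t^2)^{(d-3)/2}$; the commutation of spatial convolution with the propagator and the interchange of integrals are justified first for Schwartz $\signal$ and then extended to $\signal\in C^\infty(\R^d)$ by a standard approximation and localization argument. The even extension of $p$ in $t$ is what renders the temporal convolution over $\R$ meaningful and consistent with the evenness of $\Ro\Kx$ used above.
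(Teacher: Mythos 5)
Your proof is correct, but there is nothing in the paper to compare it against directly: Proposition~\ref{lem:reci} is imported as a known result from \cite{haltmeier2011mollification} and is never proved in the paper itself. The only related argument the paper supplies is the Appendix~\ref{app:reci} proof of the dual version (Proposition~\ref{prop:reciD}), which takes Proposition~\ref{lem:reci} for granted and reduces everything to solving the Abel-type integral equation $\ky = \Ro \Kx$ explicitly via Lemma~\ref{lem:natterer}, i.e.\ via Natterer's inversion formula for the Radon transform of radial functions. Your argument is therefore a genuinely self-contained alternative route to the cited result, and it is sound: commuting $\Kx \astx (\edot)$ with the wave operator and invoking uniqueness correctly reduces \eqref{eq:reci} to a whole-space identity; the spatial Fourier transform turns the propagator into $\cos(t\norm{\xi})$; the evenness of $\Ro\Kx$ collapses the temporal convolution against $\cos(sr)$ to a product with the one-dimensional (cosine) transform $\Fo_1(\Ro\Kx)(r)$; and the hyperplane-integral computation identifying \eqref{eq:rad} as the Radon transform of the radial function $\Kx$, followed by the Fourier slice theorem, yields precisely the needed equality $\Fo_1 (\Ro \Kx)(r) = \Fo_d \Kx(r\theta)$. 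Notably, this last identity is the same one the paper invokes (without proof) in the proof of Theorem~\ref{thm:cfd}, in the form $\Fo_d \Ro^\sharp \ky_\la = \Fo_1 \ky_\la$, so your proof supplies a justification for machinery the paper uses elsewhere. What each approach buys: the paper's citation keeps the exposition short and leans on the explicit radial Radon inversion formulas, which it needs anyway for the dual operator $\Ro^\sharp$; your route avoids any explicit inversion formula, works directly with the spectral representation of the propagator, and forces you to make explicit the conventions the paper leaves implicit (the even extension of $p$ in time so that $\astt$ over $\R$ makes sense, and the localization-by-finite-propagation-speed argument extending the identity from Schwartz data to general $\signal \in C^\infty(\R^d)$) --- both of which you handle adequately.
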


This lemma serves as the basis for the derived multiscale factorization for the wave equation and the resulting sparse reconstruction strategy. In fact, we use the following dual version where we prescribe the temporal filter $\ky$ instead of the spatial filter $\Kx$.

\begin{proposition}[Acoustic  reciprocal  principle, dual version]\label{prop:reciD}
Let  $\ky \colon \R \to \R$  be an even function  with  sufficient decay  such that  $ \ky \circ \sqrt{\abs{\edot}}   \in C^{\lceil (d-1)/2 \rceil}(\R) $ and define
\begin{align}\label{eq:reciD}
	 \Ro^\sharp  \ky  \colon \R^d  \to \R
	 \colon \xx \mapsto
	 \begin{cases}
	   \frac{(-1)^{(d-1)/2}}{\sqrt{\pi^{d-1}}}  \,  \bigl( \kl{\frac{1}{2t}  \frac{\partial}{\partial t}}^{(d-1)/2 }  \ky\bigr) (\norm{\xx} )         & \text{for  $d$  odd}   \\
	   \frac{2 (-1)^{(d-2)/2}}{\sqrt{\pi^d} }      \int_{\norm{\xx} }^\infty  \frac{\kl{\frac{1}{2t}  \frac{\partial}{\partial t}}^{d/2}  \ky(t) }{\sqrt{t^2 - \norm{\xx}^2}} t \dd t        & \text{for $d$  even} \,.
	\end{cases}
\end{align}
Then, for every $\signal \in C^\infty (\R^d)$,
\begin{equation}\label{eq:reci2}
\forall (\xx, t) \in \R^d \times (0, \infty) \colon \quad
\ky  \astt  \left( \Wo \signal  \right)(\xx, t)
=
\Wo \left(   ( \Ro^\sharp  \ky  ) \astx \signal   \right)(\xx, t)    \,.
\end{equation}
\end{proposition}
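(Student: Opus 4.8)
The plan is to deduce the dual statement from the original acoustic reciprocal principle (Proposition~\ref{lem:reci}) by \emph{inverting} the profile transform $\Ro$. If I can exhibit a radial function $\Ro^\sharp\ky = \kx(\norm{\edot})$ whose image under $\Ro$ is the prescribed temporal filter, that is
\[
	\Ro\kl{\Ro^\sharp\ky} = \ky\,,
\]
then substituting $\Kx = \Ro^\sharp\ky$ into \eqref{eq:reci} immediately gives
\[
	\Wo\kl{(\Ro^\sharp\ky)\astx\signal} = \kl{\Ro(\Ro^\sharp\ky)}\astt\Wo\signal = \ky\astt(\Wo\signal)\,,
\]
which is exactly \eqref{eq:reci2}. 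Hence the whole proposition reduces to the claim that $\Ro^\sharp$ from \eqref{eq:reciD} is a right inverse of $\Ro$ on the class of admissible even functions, together with a verification that $\Ro^\sharp\ky$ is of the type to which Proposition~\ref{lem:reci} applies.

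To construct this inverse I would recognize \eqref{eq:rad} as an Abel-type fractional integral. For $d>1$ the change of variables $\sigma=s^2$, $\tau=t^2$, together with $h(\sigma) = \kx(\sqrt{\sigma})$ and $G(\tau) = \ky(\sqrt{\tau})$, turns the defining integral into
\[
	G(\tau) = \frac{\omega_{d-2}}{2}\int_\tau^\infty h(\sigma)\,(\sigma-\tau)^{(d-3)/2}\dd\sigma\,,
\]
which, up to the factor $\tfrac{\omega_{d-2}}{2}\,\Gamma\!\kl{\tfrac{d-1}{2}}$, is the right-sided (Weyl) Riemann--Liouville fractional integral $\mathcal{I}^{\alpha}h$ of order $\alpha = (d-1)/2$. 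Since $\omega_{d-2} = 2\pi^{(d-1)/2}/\Gamma\!\kl{\tfrac{d-1}{2}}$, this factor collapses to $\pi^{(d-1)/2}$, so the equation to be solved is simply $G = \pi^{(d-1)/2}\,\mathcal{I}^{\alpha}h$. The hypothesis $\ky\circ\sqrt{\abs{\edot}}\in C^{\lceil(d-1)/2\rceil}(\R)$ is exactly what guarantees that $G$ is smooth enough for this fractional equation to be inverted in the classical sense.

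The inverse of $\mathcal{I}^{\alpha}$ is the fractional derivative of order $\alpha$, and here the two parities separate. For $d$ \emph{odd}, $\alpha = (d-1)/2$ is a nonnegative integer and the inverse is the iterated derivative $\kl{-\tfrac{\dd}{\dd\tau}}^{\alpha}$; since $\tfrac{\dd}{\dd\tau}$ corresponds to $\tfrac{1}{2t}\tfrac{\partial}{\partial t}$ under $\tau = t^2$, one recovers $\kx(\norm{\xx}) = \pi^{-(d-1)/2}(-1)^{(d-1)/2}\kl{\tfrac{1}{2t}\tfrac{\partial}{\partial t}}^{(d-1)/2}\ky$ at $t = \norm{\xx}$, which is precisely the odd line of \eqref{eq:reciD} because $\sqrt{\pi^{d-1}} = \pi^{(d-1)/2}$. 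For $d$ \emph{even}, $\alpha$ is a half-integer, and I would factor the fractional derivative as an integer derivative of order $d/2$ composed with a half-order Abel integral $\mathcal{I}^{1/2}$. The integer part contributes $\kl{\tfrac{1}{2t}\tfrac{\partial}{\partial t}}^{d/2}$, while $\mathcal{I}^{1/2}$, written back in the radial variable, produces the singular integral $\int_{\norm{\xx}}^\infty (\,\cdot\,)\,(t^2-\norm{\xx}^2)^{-1/2}\,t\dd t$ carrying the factor $1/\sqrt{\pi}$. Matching the precise sign $(-1)^{(d-2)/2}$ and the constant $2/\sqrt{\pi^d}$ in \eqref{eq:reciD} is then a matter of collecting the $(-1)$, $\Gamma$ and $\pi$ factors and of passing between the Riemann--Liouville and Caputo forms of the fractional derivative.

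I expect the decisive obstacle to be a domain-of-validity issue rather than the calculus. Proposition~\ref{lem:reci} is stated for \emph{compactly supported} $\Kx\in L^1(\R^d)$, whereas $\Ro^\sharp\ky$ is in general only decaying and not compactly supported, so the substitution in the first paragraph is not immediately licensed. I would circumvent this by first proving the factorization for those $\ky$ whose spatial counterpart $\Ro^\sharp\ky$ is compactly supported, and then extending to the general case by a density/approximation argument, using the decay of $\ky$ to control both $(\Ro^\sharp\ky)\astx\signal$ and the continuity of $\Wo$ in the limit. A secondary, more technical point is to justify the fractional inversion under the finite smoothness assumed: in the even case one must interchange the order-$d/2$ derivative with the half-order integral and discard the resulting boundary terms at infinity, and one must check convergence of the improper integral at the endpoint $t = \norm{\xx}$, where the weight $(t^2-\norm{\xx}^2)^{-1/2}$ is singular. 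This is exactly where the decay and $C^{\lceil(d-1)/2\rceil}$ hypotheses on $\ky$ are consumed.
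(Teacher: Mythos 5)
Your proposal is correct and follows the same overall route as the paper: the proof in Appendix~\ref{app:reci} also reduces \eqref{eq:reci2}, via Proposition~\ref{lem:reci}, to the single claim that $\Kx = \Ro^\sharp \ky$ solves $\Ro \Kx = \ky$. The difference lies in how that claim is established. The paper appeals to uniqueness of the radial solution and then quotes an explicit inversion formula from Natterer's book (Lemma~\ref{lem:natterer}), which it reshapes into \eqref{eq:reciD} by the same odd/even split you describe; you instead derive the inverse from scratch by recognizing \eqref{eq:rad} as a Weyl fractional integral of order $(d-1)/2$ in the variable $\tau = t^2$. These are two presentations of the same computation --- the paper's identity \eqref{eq:comm} is precisely your half-order Abel step --- so your version buys self-containedness at the cost of redoing what the citation provides. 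Two of your side remarks deserve emphasis. First, the compact-support obstruction you raise is genuine and is silently ignored by the paper: Proposition~\ref{lem:reci} is stated only for compactly supported $\Kx \in L^1(\R^d)$, while $\Ro^\sharp \ky$ is in general merely decaying (e.g.\ for the Mexican hat filters used later in the paper), so your approximation/density step is an actual repair, not pedantry. Second, do not expect the final bookkeeping to ``match'' the printed formula: carried out carefully, your fractional-derivative computation gives the sign $(-1)^{d/2}$ in the even case, not $(-1)^{(d-2)/2}$ as in \eqref{eq:reciD}. A check with $d=2$ and $\ky(t) = \sqrt{\pi}\, e^{-t^2}$, which is $\Ro$ applied to the Gaussian $e^{-\norm{\xx}^2}$, shows that \eqref{eq:reciD} as printed returns $-e^{-\norm{\xx}^2}$ rather than $e^{-\norm{\xx}^2}$; the discrepancy is a slip in the paper (its even-case manipulation also uses the constant $\Gamma((d-1)/2)$ where $\Gamma((d-1)/2)/\sqrt{\pi}$ is correct), and your derivation, completed honestly, produces the corrected formula rather than the stated one.
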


\begin{proof}
The proof is given in Appendix~\ref{app:reci}.
\end{proof}

Note that the assumption $ \ky \circ \sqrt{\abs{\edot}}   \in C^{\lceil (d-1)/2 \rceil}(\R) $ is made so that the derivatives in \eqref{eq:reciD} are well-defined in the classical sense.

\section{Multiscale factorizations of the wave equation}
\label{sec:rec}

Based on the acoustic reciprocal principle, in this section we derive convolution factorizations for PAT. For that purpose, we fist recall some results for convolutional frames. Then we introduce convolutional frame decompositions in Subsection \ref{sec:decomp}, which are used to derive multiscale factorizations in Subsection~\ref{sec:decomp-multi}.

\subsection{Convolutional  frames}
\label{sec:conv-frames}

 Let $\La$ be an at  most countable index set and consider a family $(\Kx_\la)_{\la \in \La}$ of functions in $L^2(\R^d) \cap L^1(\R^d)$.   According to the convolution theorem we have $ \signal \astx \Kx_\la  =  \Fo_d^{-1} ( (\Fo_d \signal) \cdot (\Fo_d \Kx_\la) )$ for all $\signal \in L^2(\R^d)$. Moreover,  $\signal \astx \Kx_\la$ is well defined  almost everywhere  and satisfies $\signal \astx \Kx_\la  \in L^2(\R^d)$.  Here and in the  following we  denote by  $\Fo_d \signal ( \xxi ) \coloneqq    \int_{\R^d } \signal( \xx) e^{-  i \xx \cdot \xxi} \,  d \xx$ for $\xxi \in \R^d$ the  $d$-dimensional Fourier transform and $\Fo_d^{-1}$ its inverse. We write  $\Kx^\herm(\xx) \coloneqq \Kx(-\xx)$ for  $\Kx \in L^2(\R^d)$ and note  that $\Fo_d \Kx^\herm = \overline{[\Fo_d \Kx]}$, where $\overline{[\edot]}$ denotes complex conjugation.

 \begin{definition}[Convolutional frame] \label{def:frame}
We call a family $\conv{\Kx} = (\Kx_\la)_{\la \in \La} \subseteq (L^2(\R^d) \cap L^1(\R^d))^\La $
a convolutional frame  in $\R^d$, if there are constants $A, B>0$ such that
 \begin{equation} \label{eq:frameTI}
 \signal \in L^2(\R^d)  \colon \quad   A \norm{\signal}^2_2  \leq \sum_{\la \in \la} \norm{\signal \astx \Kx_\la}^2_2
  \leq B \norm{\signal}^2_2 \,.
\end{equation}
If $\conv{\Kx} $  is a convolutional frame, we name $A$, $B$ the frame bounds and call
\begin{enumerate}
\item $\To_{\conv{\Kx}} \colon L^2(\R^d) \to \ell^2( \Lambda, L^2(\R^d)) \colon \signal \mapsto (\Kx_\la \astx \signal)_{\la \in \La}$  analysis operator,
\item  $\To_{\conv{\Kx}}^\herm  \colon \ell^2( \Lambda, L^2(\R^d)) \to L^2(\R^d) \colon (f_\la)_{\la \in \La} \mapsto \sum_{\la \in \La} \Kx_\la^\herm \astx f_\la$ synthesis  operator,

\item $\To_{\conv{\Kx}}^\herm \To_{\conv{\Kx}} \colon L^2(\R^d) \to L^2(\R^d) \colon \signal \mapsto \sum_{\la \in \La} \Kx_\la^\herm \astx \Kx_\la \astx \signal$ frame operator.
\end{enumerate}
Finally, we call $\conv{\Kx}$ tight if $\To_{\conv{\Kx}}^\herm \To_{\conv{\Kx}} = \Io$.
\end{definition}

Note that $\ell^2( \Lambda, L^2(\R^d))$ is a Hilbert space with inner product $\inner{\conv a}{ \conv b}_\La \coloneqq \sum_{\la \in \La}   \inner{a_\la}{b_\la}$ and corresponding norm $\enorm{}_\La$. Using the analysis operator, we can write the defining identity \eqref{eq:frameTI} in the form $A \norm{\signal}^2_2  \leq \norm{\To_{\conv{\Kx}} \signal}_\La^2  \leq B \norm{\signal}^2_2$. Hence the right inequality in \eqref{eq:frameTI} states that $\To_{\conv{\Kx}} \colon L^2(\R^d) \to \ell^2( \Lambda, L^2(\R^d)) $ is well defined an bounded, whereas the left inequality states that $\To_{\conv{\Kx}}$ has a bounded Moore-Penrose inverse $\To_{\conv{\Kx}}^\plus \colon  \ell^2( \Lambda, L^2(\R^d)) \to L^2(\R^d)$. Further note that  $ \To_{\conv{\Kx}}^\herm$ is the adjoint of  $\To_{\conv{\Kx}}$.

\begin{lemma}[Characterization of convolutional frames]  \label{lem:frame}
For any family $\conv{\Kx} = (\Kx_\la)_{\la \in \La}$ of functions in $L^2(\R^d) \cap L^1(\R^d)$, the following statements are equivalent:
\begin{enumerate}[label=(\roman*)]
\item\label{lem:frame1}  $\conv{\Kx} $ is a convolutional frame  with frame bounds $A, B$.
\item\label{lem:frame2}  The identity $ A \leq  \sum_{ \la  \in \La }     \abs{\Fo_d \Kx_\la  }^2  \leq B$ holds  almost everywhere.
\end{enumerate}
\end{lemma}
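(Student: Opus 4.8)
The plan is to pass to the frequency domain, where the convolutions become pointwise multiplications and the two-sided frame estimate collapses into a pointwise bound on a single multiplier function. Concretely, for each $\la \in \La$ the convolution theorem gives $\Fo_d(\signal \astx \Kx_\la) = (\Fo_d\signal)\cdot(\Fo_d\Kx_\la)$; here $\Kx_\la \in L^1(\R^d)$ ensures that $\Fo_d\Kx_\la$ is bounded and continuous, so the product of $\Fo_d\signal \in L^2(\R^d)$ with $\Fo_d\Kx_\la \in L^\infty(\R^d)$ again lies in $L^2(\R^d)$. Using Plancherel's identity, which for the normalization fixed in the excerpt reads $\norm{\signal}_2^2 = (2\pi)^{-d}\norm{\Fo_d\signal}_2^2$, I would rewrite each summand as $\norm{\signal\astx\Kx_\la}_2^2 = (2\pi)^{-d}\int_{\R^d}\abs{\Fo_d\signal(\xxi)}^2\,\abs{\Fo_d\Kx_\la(\xxi)}^2\dd\xxi$.

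Next I would set $G\coloneqq \sum_{\la\in\La}\abs{\Fo_d\Kx_\la}^2$, a countable sum of nonnegative measurable functions and hence itself measurable with values in $[0,\infty]$. Summing the previous identity over $\la$ and interchanging sum and integral — justified by Tonelli's theorem, since every integrand is nonnegative — yields the central identity $\sum_{\la\in\La}\norm{\signal\astx\Kx_\la}_2^2 = (2\pi)^{-d}\int_{\R^d}\abs{\Fo_d\signal(\xxi)}^2\,G(\xxi)\dd\xxi$. Since also $\norm{\signal}_2^2 = (2\pi)^{-d}\int_{\R^d}\abs{\Fo_d\signal(\xxi)}^2\dd\xxi$, the frame estimate \eqref{eq:frameTI} is \emph{exactly} equivalent to the statement that $A\int\abs{\Fo_d\signal}^2 \le \int\abs{\Fo_d\signal}^2\,G \le B\int\abs{\Fo_d\signal}^2$ holds for every $\signal\in L^2(\R^d)$.

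From this reformulation both implications are short. For \ref{lem:frame2} $\Rightarrow$ \ref{lem:frame1}, if $A\le G\le B$ almost everywhere, then multiplying by $\abs{\Fo_d\signal(\xxi)}^2\ge 0$ and integrating gives the displayed inequalities, hence the frame bounds. For the converse \ref{lem:frame1} $\Rightarrow$ \ref{lem:frame2} I would use that $\Fo_d\colon L^2(\R^d)\to L^2(\R^d)$ is a bijection, so $h\coloneqq\Fo_d\signal$ ranges over all of $L^2(\R^d)$ as $\signal$ varies. The estimate then says $\int\abs{h}^2(G-B)\le 0$ and $\int\abs{h}^2(A-G)\le 0$ for every $h\in L^2(\R^d)$. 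Testing against indicators $h=\mathbf 1_E$: were $G>B$ on a set of positive measure, I could choose $E$ of finite positive measure inside it and obtain $\int_E(G-B)>0$, contradicting the upper estimate; the lower bound $A\le G$ follows symmetrically. This forces $A\le G\le B$ almost everywhere.

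The genuinely delicate points are bookkeeping rather than conceptual. I must ensure $G$ is well defined and finite almost everywhere — which is guaranteed a posteriori by the upper estimate when a frame is assumed, and is built into \ref{lem:frame2} — and I should justify the interchange of summation and integration, for which nonnegativity of all terms together with Tonelli's theorem suffices. The one step carrying the real weight is the surjectivity of the Fourier transform onto $L^2(\R^d)$: it is precisely what allows me to probe the multiplier $G$ with arbitrary $L^2$ test functions and localize, thereby upgrading an inequality valid for every $\signal$ into an almost-everywhere pointwise bound on $G$.
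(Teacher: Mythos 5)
Your proof is correct and follows essentially the same route as the paper's: pass to the Fourier domain via the convolution theorem and Plancherel, reduce \eqref{eq:frameTI} to the integral inequality $A\int\abs{\Fo_d\signal}^2 \leq \int\abs{\Fo_d\signal}^2\,G \leq B\int\abs{\Fo_d\signal}^2$ for all $\signal\in L^2(\R^d)$, and identify this with the almost-everywhere bound on $G=\sum_{\la\in\La}\abs{\Fo_d\Kx_\la}^2$. The only difference is that you spell out the final equivalence in detail (Tonelli for the interchange, surjectivity of $\Fo_d$ on $L^2(\R^d)$, and testing against indicators of sets of finite positive measure), a step the paper asserts without proof.
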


\begin{proof}
The convolution theorem and the isometry property of Fourier transform imply that \eqref{eq:frameTI} holds if and only  if for all $\signal \in L^2(\R^d)$ we have
\begin{equation*}
	A \int_{\R^d}  \abs{\Fo_d \signal (\xxi)}^2 \, \dd \xi
	\leq  \int_{\R^d} \abs{\Fo_d \signal (\xxi)}^2  \sum_{\la \in \la} \abs{\Fo_d \Kx_\la (\xxi)}^2 \, \dd \xi \leq B \int_{\R^d}  \abs{\Fo_d \signal (\xxi)}^2 \, \dd \xi \,.
\end{equation*}
This, in turn, is equivalent to the fact that Item~\ref{lem:frame2} holds.
\end{proof}

In particular, $\conv{\Kx} $ is tight if and only if $\sum_{ \la  \in \La } \abs{\Fo_d \Kx_\la  }^2 = 1$ holds almost everywhere.

\begin{lemma}[and definition of a dual convolutional frame] \label{lem:dual}
Let  $\conv{\Kx} = (\Kx_\la)_{\la \in \La}$ and  $\conv{w} = (w_\la)_{\la \in \La}$ be two convolutional frames in $\R^d$. The following statements  are equivalent:
\begin{enumerate}[label=(\roman*)]
\item\label{lem:dual1}
The identity $\sum_{\la \in \La} \overline{[\Fo_d w_\la]} \cdot  (\Fo_d \Kx_\la) = 1$ holds almost everywhere.
\item\label{lem:dual2}
The reproducing formula  $ \forall \signal \in L^2(\R^d) \colon
	\To_{\conv{w}}^\herm \To_{\conv{\Kx}} \signal =
	\sum_{\la \in \La} w_\la^\herm \astx (\Kx_\la \astx \signal)
	= \signal$ holds.
\end{enumerate}
If \ref{lem:dual1} and \ref{lem:dual2} hold, we call $(w_\la)_{\la \in \La}$ a dual convolutional frame to $(\Kx_\la)_{\la \in \La}$.
\end{lemma}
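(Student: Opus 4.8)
The plan is to pass everything to the Fourier side, where both the spatial convolutions and the synthesis--analysis composition become pointwise multiplication, so that the operator identity in \ref{lem:dual2} reduces to a scalar multiplier identity that can be directly compared with \ref{lem:dual1}. The only tools I need are already recorded at the start of Subsection~\ref{sec:conv-frames}: the convolution theorem $\Fo_d(\signal \astx \Kx_\la) = (\Fo_d \signal)\cdot(\Fo_d \Kx_\la)$, the relation $\Fo_d \Kx_\la^\herm = \overline{[\Fo_d \Kx_\la]}$, and the (scaled) isometry of $\Fo_d$ on $L^2(\R^d)$ that was used in the proof of Lemma~\ref{lem:frame}.

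The central computation proceeds as follows. Fix $\signal \in L^2(\R^d)$. Applying $\Fo_d$ termwise to the synthesis--analysis sum and using the two Fourier identities above gives, for each $\la$,
\[
\Fo_d\bigl(w_\la^\herm \astx (\Kx_\la \astx \signal)\bigr) = \overline{[\Fo_d w_\la]}\,(\Fo_d \Kx_\la)\,(\Fo_d \signal)\,.
\]
Summing over $\la$ and pulling $\Fo_d$ out of the series then yields
\[
\Fo_d\bigl(\To_{\conv{w}}^\herm \To_{\conv{\Kx}} \signal\bigr) = m \cdot (\Fo_d \signal)\,, \qquad m \coloneqq \sum_{\la \in \La} \overline{[\Fo_d w_\la]}\,(\Fo_d \Kx_\la)\,.
\]
Since $\Fo_d$ is injective, the reproducing formula \ref{lem:dual2} is therefore equivalent to the pointwise statement that $m \cdot (\Fo_d \signal) = \Fo_d \signal$ for every $\signal \in L^2(\R^d)$, i.e. that $m$ acts as the identity multiplier on $\Fo_d(L^2(\R^d)) = L^2(\R^d)$.

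To close the argument I would first show that $m \in L^\infty(\R^d)$, so that the multiplier is genuinely well defined, and then that the identity-multiplier condition is equivalent to $m = 1$ almost everywhere, which is exactly \ref{lem:dual1}. Boundedness of $m$ follows from Cauchy--Schwarz together with Lemma~\ref{lem:frame}: almost everywhere,
\[
\abs{m} \le \Bigl(\sum_{\la \in \La} \abs{\Fo_d w_\la}^2\Bigr)^{1/2}\Bigl(\sum_{\la \in \La} \abs{\Fo_d \Kx_\la}^2\Bigr)^{1/2} \le \sqrt{B_{\conv{w}}\, B_{\conv{\Kx}}}\,,
\]
where $B_{\conv{w}}, B_{\conv{\Kx}}$ denote the respective upper frame bounds. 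Given $m \in L^\infty$, the equivalence ``$m \cdot (\Fo_d\signal) = \Fo_d\signal$ for all $\signal$'' $\iff$ ``$m = 1$ a.e.'' is routine: one direction is immediate, and for the converse, if $m \ne 1$ on a set $E$ of positive measure one tests against an $\signal$ whose Fourier transform is the indicator of a bounded subset of $E$ to force a contradiction.

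The step I expect to require the most care is the interchange of $\Fo_d$ with the infinite sum and the accompanying claim that $m$ is an honest bounded function rather than a merely formal series. Here the frame hypotheses are essential: the upper frame bounds guarantee, via Lemma~\ref{lem:frame}, that $\sum_{\la} \abs{\Fo_d \Kx_\la}^2$ and $\sum_{\la} \abs{\Fo_d w_\la}^2$ are bounded almost everywhere, so the defining series for $m$ converges absolutely a.e.; simultaneously, boundedness of the analysis and synthesis operators ensures that the partial sums $\sum_{\la} w_\la^\herm \astx (\Kx_\la \astx \signal)$ converge in $L^2(\R^d)$, and continuity of $\Fo_d$ then legitimizes moving $\Fo_d$ inside the series.
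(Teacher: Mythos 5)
Your proposal is correct and takes essentially the same route as the paper's proof: both pass to the Fourier domain via the convolution theorem and the identity $\Fo_d w_\la^\herm = \overline{[\Fo_d w_\la]}$, thereby reducing the reproducing formula in \ref{lem:dual2} to the pointwise multiplier identity in \ref{lem:dual1}. The paper states this reduction in two lines; the extra material you supply---the $L^\infty$ bound on the multiplier via Cauchy--Schwarz and Lemma~\ref{lem:frame}, the $L^2$-convergence argument legitimizing the interchange of $\Fo_d$ with the series, and the indicator-function test showing the multiplier must equal $1$ almost everywhere---consists precisely of the steps the paper leaves implicit, and all of them are sound.
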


\begin{proof}
The linearity and continuity of the Fourier transform together with the convolution theorem show  that  \ref{lem:dual2} is equivalent to $  (\Fo_d \signal) \cdot  \sum_{\la \in \La} (\Fo_d w_\la^\herm) \cdot (\Fo_d \Kx_\la)  = \Fo_d \signal $ for all $  \signal \in L^2(\R^d)$.  Because $\Fo_d w_\la^\herm = \overline{[\Fo_d w_\la]}$ this implies the desired equivalence.
\end{proof}

In particular, Item~\ref{lem:dual1} in Lemma \ref{lem:dual} is satisfied if $\conv{w}$ is taken as the canonical dual convolutional frame $\conv{\Kx}^\plus \coloneqq (\Kx_\la^\plus)_{\la \in \La}$ which is defined by
\begin{equation}\label{eq:cf-dual}
\forall \la \in \La \colon \quad \Fo_d \Kx_\la^\plus  \coloneqq \frac{\Fo_d u_\la  }{  \sum_{\mu \in \La} \abs{\Fo_d u_\mu}^2} \,.
\end{equation}
In this case, $\To_{\conv{\Kx}^\plus} = \To_{\conv{\Kx}}^\plus$ is the Moore-Penrose inverse of the analysis operator $\To_{\conv{\Kx}}$.

\subsection{Convolution factorization}
\label{sec:decomp}

The following concept is central for this paper.

\begin{definition}[Convolution factorization of the wave equation] \label{def:cdf}
Let $\La$ be an at most countable index set and consider families $\conv{\Kx} = (\Kx_\la)_{\la \in \La} \in (L^2(\R^d))^\La$ and $ \conv{\ky} = (\ky_\la)_{\la \in \La} \in (L^2(\R))^\La$. We call the pair $( \conv{\Kx} ,  \conv{\ky})$  a convolution factorization  for $\Wo$ if the  following hold:

\begin{enumerate}[label=(CFD\arabic*), leftmargin=5em]
\item \label{cdf1} $(\Kx_\la)_{\la \in \La}$ is a convolutional frame of $L^2(\R^d)$,
\item \label{cdf2} $(\ky_\la)_{\la \in \La} $ is a convolutional frame of $L^2(\R)$,
\item \label{cdf3} $\forall \signal \in C^\infty_c (\R^d)  \colon  \Wo ( \Kx_\la \astx  \signal) = \ky_\la \astt (\Wo   \signal )$.
\end{enumerate}
\end{definition}

Given a convolution factorization $( \conv{\Kx} , \conv{\ky})$ for $\Wo$ and data $g = \Wo \signal $, the commutation relation~\ref{cdf3} shows that it is sufficient to solve each equation $ \Wo \signal_\la = \ky_\la \astt g$. These equations now involve the unknowns $f_\la = \Kx_\la \astx \signal$ containing specific prior information that we can exploit for inversion. Moreover, we will later show that the same identity holds for any spatial sampling scheme, allowing its application to CSPAT.  In \cite{agranovsky1996approximation} it is shown that $\Wo$ is injective when restricted to $L^p(\R^d)$ with $p \leq 2d/(d-1)$. Thus, if $( \conv{\Kx} , \conv{\ky})$ is a convolution factorization and $\Kx_\la \astx \signal$ has sufficient decay, then we have the reproduction formula
\begin{equation} \label{eq:fac-inv}
	\signal =
	\sum_{\la \in \La} \Kx_\la^\herm  \astx \Wo^{-1} (\ky_\la \astt (\Wo   \signal ) )
\end{equation}
Indeed, the factorization identity \eqref{eq:fac-inv} is the reason why we call a pair $( \conv{\Kx} , \conv{\ky})$ satisfying \ref{cdf1}-\ref{cdf3} a convolutional factorization.

As the  main theoretical result, in this paper we  construct  explicit convolution factorizations for the PAT forward operator. For that purpose, recall
 \begin{equation*}
\Ro^\sharp  \ky  \colon \R^d  \to \R
	 \colon \xx \mapsto
	 \begin{cases}
	   \frac{(-1)^{(d-1)/2}}{\sqrt{\pi^{d-1}}}  \,  \bigl( \kl{\frac{1}{2t}  \frac{\partial}{\partial t}}^{(d-1)/2 }  \ky\bigr) (\norm{\xx} )         & \text{for  $d$  odd}   \\
	   \frac{2 (-1)^{(d-2)/2}}{\sqrt{\pi^d} }      \int_{\norm{\xx} }^\infty  \frac{\kl{\frac{1}{2t}  \frac{\partial}{\partial t}}^{d/2}  \ky(t) }{\sqrt{t^2 - \norm{\xx}^2}} t \dd t        & \text{for $d$  even} \,,
	\end{cases}
\end{equation*}
and  the dual version of the acoustical reciprocal principle
$\ky  \astt  \left( \Wo \signal  \right)
= \Wo \left(   ( \Ro^\sharp  \ky  ) \astx \signal   \right)$
 stated in Proposition~\ref{prop:reciD}.

\begin{theorem}[Construction of convolution factorizations] \label{thm:cfd}
Let $(\ky_\la)_{\la \in \La} \in L^2(\R)^\La$ be a convolutional frame  consisting of even functions $\ky_\la$ with sufficient decay such that such that  $ \ky_\la \circ \sqrt{\abs{\edot}}   \in C^{\lceil (d-1)/2 \rceil}(\R) $ and let $ (\ky_\la^\plus)_{\la \in \La}$  be its canonical dual and set $\Kx_\la \coloneqq  \Ro^\sharp \ky_\la$.
\begin{enumerate}
\item  \label{thm:cfd1}
The pair  $( (\Kx_\la)_{\la \in \La} ,   (\ky_\la)_{\la \in \La} )$ is a convolutional frame decomposition for  $\Wo$.
\item \label{thm:cfd2}
The   canonical dual of $ (\Kx_\la)_{\la \in \La}$ is given by $(\Ro^\sharp \ky_\la^\plus)_{\la \in \La}$.
\item \label{thm:cfd3}
For all $\signal \in C^\infty_c (\R^d)$, the factors  $f_\la = \Kx_\la \astx \signal$ satisfy
\begin{align} \label{eq:rep1}
	\signal &= \sum_{\la \in \La}   \Kx_\la^\plus  \ast
	f_\la \,,
	\\ \label{eq:rep2}
	\Wo  f_\la &=    \ky_\la \astt \Wo \signal  \,.
\end{align}
Hence any  $\signal \in C^\infty_c (\R^d)$  can be recovered from data $\Wo \signal$ by first solving equation \eqref{eq:rep2} for $f_\la$ and then evaluating the  series \eqref{eq:rep1}.
\end{enumerate}
\end{theorem}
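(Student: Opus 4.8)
The plan is to push everything through the Fourier transform and thereby reduce all three claims to the one-dimensional frame data $(\ky_\la)_{\la\in\La}$. The single ingredient I would establish first, and which I expect to carry the whole argument, is the Fourier-domain relation
\begin{equation}\label{eq:keyFourier}
\Fo_d(\Ro^\sharp \ky)(\xxi) = (\Fo_1 \ky)(\norm{\xxi})
\qquad \text{for a.e. } \xxi \in \R^d ,
\end{equation}
valid for every even $\ky$ satisfying the decay and regularity hypotheses of Proposition~\ref{prop:reciD}. This is the Fourier-domain shadow of the dual reciprocal principle: since \eqref{eq:reci2} turns spatial convolution by $\Ro^\sharp\ky$ into temporal convolution by $\ky$, and convolution becomes multiplication after Fourier transform, the spatial multiplier $\Fo_d(\Ro^\sharp\ky)$ must agree with the temporal multiplier $\Fo_1\ky$ evaluated at the radial frequency $\norm{\xxi}$. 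I would derive \eqref{eq:keyFourier} by combining the classical projection--slice identity $\Fo_d \Kx(\xxi)=(\Fo_1(\Ro\Kx))(\norm{\xxi})$ for radial $\Kx$ (the relation underlying \eqref{eq:rad}, with the normalisation $\omega_{d-2}$ arranged so that the constant is exactly one) with the inversion relation $\Ro(\Ro^\sharp\ky)=\ky$ on which Proposition~\ref{prop:reciD} rests.

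With \eqref{eq:keyFourier} available, item~\ref{thm:cfd1} falls out quickly. Condition~\ref{cdf2} is the hypothesis, and \ref{cdf3} is exactly Proposition~\ref{prop:reciD} applied with $\ky=\ky_\la$ and $\Kx_\la=\Ro^\sharp\ky_\la$. For \ref{cdf1} I square and sum \eqref{eq:keyFourier} to obtain $\sum_{\la}\abs{\Fo_d\Kx_\la(\xxi)}^2=\sum_\la\abs{\Fo_1\ky_\la(\norm{\xxi})}^2$. Since $(\ky_\la)$ is a one-dimensional convolutional frame, Lemma~\ref{lem:frame} gives $A\le\sum_\la\abs{\Fo_1\ky_\la}^2\le B$ for a.e.\ $\rho\in\R$; I transfer this a.e.\ bound from $\R$ to $\R^d$ along the map $\xxi\mapsto\norm{\xxi}$, using that the exceptional null set $N\subseteq\R$ pulls back to $\{\xxi:\norm{\xxi}\in N\}$, whose measure is $\omega_{d-1}\int_{N\cap[0,\infty)}\rho^{d-1}\,\dd\rho=0$. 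Applying Lemma~\ref{lem:frame} in the reverse direction then shows $(\Kx_\la)$ is a convolutional frame with the same bounds $A,B$, which together with \ref{cdf2}--\ref{cdf3} proves item~\ref{thm:cfd1}.

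For item~\ref{thm:cfd2} I compare Fourier multipliers directly. By \eqref{eq:cf-dual} the canonical dual satisfies $\Fo_d\Kx_\la^\plus=\Fo_d\Kx_\la\big/\sum_\mu\abs{\Fo_d\Kx_\mu}^2$, and substituting \eqref{eq:keyFourier} turns the right-hand side into $\Fo_1\ky_\la(\norm{\xxi})\big/\sum_\mu\abs{\Fo_1\ky_\mu(\norm{\xxi})}^2$, which is precisely $\Fo_1\ky_\la^\plus(\norm{\xxi})$ by the one-dimensional instance of \eqref{eq:cf-dual}. A second use of \eqref{eq:keyFourier}, now applied to $\ky_\la^\plus$, identifies this with $\Fo_d(\Ro^\sharp\ky_\la^\plus)(\xxi)$, whence $\Kx_\la^\plus=\Ro^\sharp\ky_\la^\plus$. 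It is exactly here that the constant in \eqref{eq:keyFourier} being unity is essential: any other constant would survive the division and appear as a spurious factor.

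Finally, item~\ref{thm:cfd3} combines the two previous parts. Equation~\eqref{eq:rep2} is just \ref{cdf3} rewritten with $f_\la=\Kx_\la\astx\signal$. For \eqref{eq:rep1}, item~\ref{thm:cfd2} shows $(\Kx_\la^\plus)_{\la\in\La}$ is the canonical dual of $(\Kx_\la)_{\la\in\La}$, so the reproducing formula of Lemma~\ref{lem:dual}, item~\ref{lem:dual2}, gives $\sum_\la (\Kx_\la^\plus)^\herm\astx(\Kx_\la\astx\signal)=\signal$; because each $\Kx_\la^\plus=\Ro^\sharp\ky_\la^\plus$ is radial, hence even, we have $(\Kx_\la^\plus)^\herm=\Kx_\la^\plus$, and \eqref{eq:rep1} follows. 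The main obstacle throughout is establishing \eqref{eq:keyFourier} with the correct unit constant, i.e.\ verifying that the normalisations in \eqref{eq:rad} and \eqref{eq:reciD} make $\Ro^\sharp$ an exact inverse of $\Ro$ and fix the projection--slice constant to one; once this is done, the frame-theoretic claims are routine transfers between the one- and $d$-dimensional Fourier pictures, the only remaining technical point being the a.e.\ transfer of the frame inequality under $\xxi\mapsto\norm{\xxi}$.
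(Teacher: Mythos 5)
Your proposal is correct and follows essentially the same route as the paper's proof: the key identity $\Fo_d(\Ro^\sharp \ky_\la) = \Fo_1\ky_\la(\norm{\edot})$ obtained from $\Ro\Ro^\sharp\ky_\la = \ky_\la$ and the Fourier slice theorem, Lemma~\ref{lem:frame} to transfer the frame bounds between dimensions, the multiplier computation \eqref{eq:cf-dual} for identifying the canonical dual as $(\Ro^\sharp\ky_\la^\plus)_{\la\in\La}$, and the reciprocal principle for the commutation relation \ref{cdf3}. Your extra care with the null-set transfer under $\xxi\mapsto\norm{\xxi}$ and with the evenness of $\Kx_\la^\plus$ (needed to drop the flip $(\edot)^\herm$ in Lemma~\ref{lem:dual}) only makes explicit details that the paper leaves implicit.
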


\begin{proof}
To show  Item \ref{thm:cfd1} we verify  \ref{cdf1}-\ref{cdf3} from Definition~\ref{def:cdf}.
Item  \ref{cdf1} is satisfied because  $(\ky_\la)_{\la \in \La} \in L^2(\R)^\La$ is  a convolutional frame according to the made assumptions. Item \ref{cdf3} follows from the acoustic reciprocal principle Proposition~\ref{lem:reci}. It remains to verify Item~\ref{cdf2}, namely that the family  $(\Kx_\la)_{\la \in \La} \in L^2(\R)^\La$ is a convolutional frame.  For that purpose, recall that
$\Ro \Ro^\sharp  \ky_\la = \ky_\la$ where $\Ro$ denotes the Radon  transform of a radial function.    According to the Fourier slice theorem  we have  $\Fo_d \Ro^\sharp  \ky_\la = \Fo_1 \ky_\la$. Therefore
$\sum_{\la \in \La} \lvert \Fo_d \Ro^\sharp \ky_\la \rvert^2 =  \sum_{\la \in \La}  \lvert \Fo_1 \ky_\la \rvert^2$ which implies   that $(\Ro^\sharp \ky_\la)_{\la \in \La}$ is a convolutional frame according to Lemma~\ref{lem:frame}. Moreover, we have $\Fo_1 \ky_\la^\plus =   \Fo_1 \ky_\la/  \sum_{\la \in \La} \abs{\Fo_1 \ky_\la}^2$ and therefore
$$
\Fo_d \Ro^\sharp  \ky_\la^\plus
=
\Fo_1 \ky_\la^\plus
= \frac{\Fo_1 \ky_\la}{ \sum_{\la \in \La} \abs{\Fo_1 \ky_\la}^2}
= \frac{\Fo_d \Ro^\sharp  \ky_\la}{  \sum_{\la \in \La} \abs{\Fo_d   \Ro^\sharp  \ky_\la}^2 }\,,
$$
which shows that $ (\Ro^\sharp \ky_\la^\plus)_{\la \in \La}$ is the canonical dual of $ (\Ro^\sharp \ky_\la)_{\la \in \La}$ which is Item \ref{thm:cfd2}. Finally, Item~\ref{thm:cfd3} follows  Items~\ref{thm:cfd1}, \ref{thm:cfd2} and the definitions of a CDF and a dual frame.
\end{proof}

\begin{remark}\label{rem:convergence}
In particular, Theorem~\ref{thm:cfd} states that $(\Kx_\la^\plus)_{\la \in \La}$ is a convolutional frame, which implies that  $\sum_{\la \in \La} \norm{\Kx_\la^\plus \ast f_\la}^2$ is finite. This in turn shows that the series in \eqref{eq:rep1} is absolutely convergent in $L^2(\R^d)$. Besides the condition that $(\ky_\la)_{\la \in \La} $ is a convolutional frame, this only requires that $\ky_\la $ is contained in $C^{\lceil (d-1)/2 \rceil}(\R)$ and has sufficient decay for every $\la \in \La$. These conditions will be fulfilled for the particular choices made in the numerical results.
\end{remark}

\subsection{Multiscale factorization}
\label{sec:decomp-multi}

As shown in the previous subsection, a convolution factorization decomposes the original image reconstruction problem into multiple reconstruction problems, one for every convolved initial pressure $\Kx_\la \astx \signal$.The next basic idea is to take $(\Kx_\la)_{\la \in \La}$ as a multiscale system to be able to take sparsity into account.

For given  $\Kx \in L^2(\R^d) \cap L^1(\R^d)$  consider the scaled versions
\begin{equation}\label{eq:wave1}
  	\Kx_{j} \colon \R^d \to \R \colon   \xx   \mapsto   2^{jd}
	\, \Kx \bigl( 2^j x \bigr)  \quad  \text{ for } j \geq  1 \,.
\end{equation}
According to the scaling property of the $d$-dimensional Fourier transform we have $\Fo_d \Kx_{j} (\xi)=  \Fo_d \Kx (2^{-j} \xi )$. Assume that  $\Fo_d \Kx$ has essential support $\set{\xi \in \R^d \mid  b_0 \leq \norm{\xi} \leq  2 b_0} $, where $b_0 > 0$ is the essential bandwidth. Then, the Fourier transform $\Fo_d \Kx_j$ has essential support $\set{\xi \in \R^d \mid  2^{j-1} b_0  \leq  \norm{\xi} \leq 2^j b_0} $. The union over all $j \geq 1$ covers all frequencies except the low frequencies contained in the ball $B_\low = \set{\xi \in \R^d \mid \norm{\xi}  < b_0}$. In order to obtain a convolutional frame with reasonable constants we therefore  select  another function $\Kxlow \in L^2(\R^d)$ such that $\Fo_d \Kxlow$ covers frequencies in $b_0$.

\begin{definition}[Multiscale convolution decomposition]\label{def:MSD}
Let $\Kxlow, \Kx \in L^2(\R^d) \cap L^1(\R^d)$ and define $\Kx_{j}$ for $j \geq 1$ by \eqref{eq:wave1}. We call the family $ (\Kx_j) _{j \in \N}$ a multiscale convolution decomposition  in $L^2(\R^d)$ if it forms a convolutional frame. For $\signal \in L^2( \R^d)$, we  refer to $  \Kxlow \astx \signal $ as the low-frequency factor  and  to $\Kx_j  \astx \signal $ for $j \geq 1$ as the high-frequency factor at scale $j$.
\end{definition}

According to Lemma \ref{lem:frame} and the scaling property of the Fourier transform, the family $ \conv{\Kx} = (\Kx_j) _{j \in \N}$ is a multiscale decomposition if and only if there are constants $A, B  >0$ such that
\begin{equation} \label{eq:mdsF}
A \leq  \sum_{j \in \N}  \abs{\Fo_d \Kx_j (\xi) }^2 \leq B \quad \text{ for almost every $\xi \in \R^d$} \,.
\end{equation}
Moreover, the canonical dual frame  $\conv{\Kx}^\plus = (\Kx_{j}^\plus) _{j \in \N} $ of  $\conv{\Kx}$   is given by   the Fourier representation  $ \Fo_d \Kx_j^\plus  \coloneqq \Fo_d u_j  /  \sum_{k \in \N} \abs{\Fo_d u_k}^2$ for $j \in \N$. In the one-dimensional case, we denote a multiscale decomposition by $(\ky_{j}) _{j \in \N}$.

 \begin{definition}[Multiscale factorization of the wave equation]
We call a pair $( \conv{\Kx} ,  \conv{\ky})$ a multiscale factorization of $\Wo$ if $( \conv{\Kx} ,  \conv{\ky})$ is  a convolutional frame decomposition such that  $ \conv{\Kx} =  (\Kx_j) _{j \in \N}$  and $ \conv{\ky} =  (\ky_j) _{j \in \N} $ are  multiscale decompositions in $L^2(\R^d)$ and $L^2(\R)$, respectively.
\end{definition}

From Theorem \ref{thm:cfd} we immediately get the following.

\begin{theorem}[Construction of multiscale factorizations] \label{thm:imd}
Let $ \conv{\ky} =(\ky_{j}) _{j \in \N} $ be a multiscale decomposition in $L^2(\R)$ consisting of even functions with sufficient decay, let $ \conv{\ky}^\plus$  be its canonical dual and define $ \conv{\Kx} \coloneqq  (\Ro^\sharp \ky_j) _{j \in \N}$. Then the following holds
\begin{enumerate}

\item  \label{thm:imd1}
The pair  $(  \conv{\Kx},   \conv{\ky} )$ is a multiscale factorization for  $\Wo$.

\item  \label{thm:imd2} The canonical dual $  (\Kx_{j}^\plus) _{j \in \N}$ of $\conv{\Kx}$ has the Fourier representation
\begin{align} \label{eq:dual}
  \Fo_d \Kx_j^\plus(\xi)
&= \frac{ \Fo_1 \ky_j(\norm{\xi}) }{\sum_{j \in \N}   \abs{\Fo_1 \ky_{j} (\norm{\xi}) }^2}
  \,.
\end{align}

\item \label{thm:imd3}
For all $\signal \in C^\infty_c(\R^d)$ the factors  $\signal_j \coloneqq   \Kx_j \astx  \signal$ satisfy
\begin{align}
\label{eq:imd3a}
    \signal &= \sum_{j  \in \N}
	\Kx_{j}^\plus  \astx f_{j}
	\\ \label{eq:imd3b}
     \Wo \signal_{j} &= \ky_{j} \astt \Wo \signal
	 \,.
\end{align}
Hence any  $\signal \in L^2(\R^d)$ can be recovered from $\Wo \signal$ by first solving \eqref{eq:imd3b}  and then evaluating the series \eqref{eq:imd3a}.
\end{enumerate}
\end{theorem}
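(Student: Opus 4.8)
The plan is to obtain the whole statement as a specialization of Theorem~\ref{thm:cfd} to the index set $\La = \N$. By hypothesis $\conv{\ky} = (\ky_j)_{j \in \N}$ is a multiscale decomposition, hence in particular a convolutional frame of even functions with sufficient decay, so Theorem~\ref{thm:cfd} already supplies three of the four ingredients: the pair $((\Ro^\sharp \ky_j)_{j \in \N}, (\ky_j)_{j \in \N})$ is a convolutional frame decomposition, its canonical dual is $(\Ro^\sharp \ky_j^\plus)_{j \in \N}$, and the factors satisfy the reproduction and commutation relations. The only genuinely new point is to check that $\conv{\Kx} = (\Ro^\sharp \ky_j)_{j \in \N}$ inherits the dyadic scaling structure required by Definition~\ref{def:MSD}, and then to translate the canonical dual into the radial Fourier form \eqref{eq:dual}.

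For Item~\ref{thm:imd1} I would start from the identity $\Fo_d (\Ro^\sharp \ky) = \Fo_1 \ky$ (read as radial profiles, $\Fo_d(\Ro^\sharp \ky)(\xi) = \Fo_1 \ky(\norm{\xi})$) established in the proof of Theorem~\ref{thm:cfd} via the Fourier slice theorem. For the high-frequency generators $\ky_j = 2^j \ky(2^j \edot)$ with $j \geq 1$, the one-dimensional dilation law gives $\Fo_1 \ky_j(\omega) = \Fo_1 \ky(2^{-j} \omega)$, whence, with the mother $\Kx \coloneqq \Ro^\sharp \ky$,
\begin{equation*}
	\Fo_d \Kx_j(\xi) = \Fo_1 \ky_j(\norm{\xi}) = \Fo_1 \ky(2^{-j} \norm{\xi}) = \Fo_d \Kx(2^{-j} \xi) \,.
\end{equation*}
By injectivity of $\Fo_d$ this is exactly the scaling relation $\Kx_j = 2^{jd} \Kx(2^j \edot)$ of \eqref{eq:wave1}, while the low-frequency member $\Kx_0 = \Ro^\sharp \ky_0 \in L^2(\R^d)$ takes the role of $\Kxlow$. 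Thus $\conv{\Kx}$ has the dyadic form of Definition~\ref{def:MSD}, and since it is a convolutional frame by Theorem~\ref{thm:cfd}, it is a multiscale decomposition; as $\conv{\ky}$ is one by assumption, $(\conv{\Kx}, \conv{\ky})$ is a multiscale factorization.

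For Item~\ref{thm:imd2} I would apply the same radial-profile identity to the dual generators $\Kx_j^\plus = \Ro^\sharp \ky_j^\plus$ furnished by Theorem~\ref{thm:cfd}, giving $\Fo_d \Kx_j^\plus(\xi) = \Fo_1 \ky_j^\plus(\norm{\xi})$, and then insert the one-dimensional canonical-dual formula $\Fo_1 \ky_j^\plus = \Fo_1 \ky_j / \sum_{k \in \N} \abs{\Fo_1 \ky_k}^2$; evaluating at $\norm{\xi}$ yields \eqref{eq:dual}. Item~\ref{thm:imd3} is then inherited directly from Item~\ref{thm:cfd3} of Theorem~\ref{thm:cfd}: the commutation relation \eqref{eq:imd3b} is \ref{cdf3} applied to $\signal_j = \Kx_j \astx \signal$, and \eqref{eq:imd3a} is the reproducing identity of the dual convolutional frame from Lemma~\ref{lem:dual}. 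Since the radial real-valued generators satisfy $(\Kx_j^\plus)^\herm = \Kx_j^\plus$, this reproducing identity takes precisely the form \eqref{eq:imd3a}, and because it is the statement $\To_{\conv{\Kx}^\plus}^\herm \To_{\conv{\Kx}} \signal = \signal$ of Lemma~\ref{lem:dual}, it holds for every $\signal \in L^2(\R^d)$ with no separate density argument.

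The only step needing real verification, and hence the main (and rather mild) obstacle, is the scaling claim: one must confirm that $\Ro^\sharp$ intertwines the one-dimensional dyadic dilations with the $d$-dimensional ones, i.e.\ sends a dyadic system in $L^2(\R)$ to a dyadic system in $L^2(\R^d)$. As shown above this is transparent on the Fourier side through the slice identity $\Fo_d \Ro^\sharp \ky = \Fo_1 \ky$, so no quantitative estimate is needed and the remainder is pure specialization of Theorem~\ref{thm:cfd}.
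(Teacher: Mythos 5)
Your proposal is correct and takes essentially the same route as the paper, whose entire proof of this theorem is the one-line remark that it follows from Theorem~\ref{thm:cfd}. Your only addition is to verify explicitly, via the slice identity $\Fo_d \Ro^\sharp \ky = \Fo_1 \ky$, that $(\Ro^\sharp \ky_j)_{j \in \N}$ inherits the dyadic scaling structure required by Definition~\ref{def:MSD} (and that the radial, real-valued duals satisfy $(\Kx_j^\plus)^\herm = \Kx_j^\plus$) --- details the paper leaves implicit but which your argument settles correctly.
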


\begin{proof}
Follows from  Theorem \ref{thm:cfd}.
\end{proof}

Alternatively, we have the following result  that avoids computing the  canonical dual.

\begin{corollary}[Multiscale reconstruction]\label{cor:cfd}
Let $(\ky_j)_{j \in \N} \in (L^2(\R))^\N$ be a  multiscale decomposition, assume that $\ky_j$ are even functions in $C^{\lceil (d-1)/2 \rceil}(\R) $ with sufficient decay and set $ \Kx_j  \coloneqq  \Ro^\sharp \ky_j$. Then, for all $\signal \in C_c^\infty(\R^d)$, the factors $\signal_j =   \Kx_j \astx  \signal$ satisfy
\begin{align} \label{eq:rep1-m}
	\Phi \astx  \signal &= \sum_{j \in \N}      \Kx_j^\ast \astx
	\signal_j
	\\ \label{eq:rep2-m}
	\Wo  \signal_j &=    \ky_j \astt \Wo \signal  \,,
\end{align}
with $\Phi  \coloneqq     \Fo_d^{-1} \kl{   \sum_{ j  \in \N }     \abs{\Fo_d \Kx_j  }^2  }$.
\end{corollary}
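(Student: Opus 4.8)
The plan is to prove the two identities separately: \eqref{eq:rep2-m} is an immediate instance of the reciprocity principle, while \eqref{eq:rep1-m} is a frame-operator identity that I would verify in the Fourier domain.

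First I would dispose of \eqref{eq:rep2-m}. Since each $\ky_j$ is even, lies in $C^{\lceil (d-1)/2 \rceil}(\R)$, and has sufficient decay, Proposition~\ref{prop:reciD} applies with $\ky = \ky_j$. As $\Kx_j = \Ro^\sharp \ky_j$ and $\signal_j = \Kx_j \astx \signal$, identity \eqref{eq:reci2} reads $\Wo \signal_j = \Wo((\Ro^\sharp \ky_j) \astx \signal) = \ky_j \astt (\Wo \signal)$, which is exactly \eqref{eq:rep2-m}. No further work is needed here.

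For \eqref{eq:rep1-m} I would observe that $\sum_{j \in \N} \Kx_j^\herm \astx \signal_j = \sum_{j \in \N} \Kx_j^\herm \astx \Kx_j \astx \signal$ is the frame operator $\To_{\conv{\Kx}}^\herm \To_{\conv{\Kx}}$ applied to $\signal$, and then pass to the Fourier side. Using the convolution theorem together with $\Fo_d \Kx_j^\herm = \overline{[\Fo_d \Kx_j]}$, each summand transforms into $\abs{\Fo_d \Kx_j}^2 \, \Fo_d \signal$, so that
\begin{equation*}
\Fo_d\Bigl( \sum_{j \in \N} \Kx_j^\herm \astx \signal_j \Bigr) = \Bigl( \sum_{j \in \N} \abs{\Fo_d \Kx_j}^2 \Bigr) \Fo_d \signal = (\Fo_d \Phi)\, \Fo_d \signal = \Fo_d(\Phi \astx \signal),
\end{equation*}
where the middle equality uses the definition of $\Phi$. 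Applying $\Fo_d^{-1}$ then gives \eqref{eq:rep1-m}.

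The step that requires care --- and the main obstacle --- is justifying the termwise transform and the convergence of the series. By Theorem~\ref{thm:imd} the family $\conv{\Kx} = (\Ro^\sharp \ky_j)_{j \in \N}$ is a multiscale decomposition, hence a convolutional frame, so Lemma~\ref{lem:frame} yields $A \leq \sum_{j \in \N} \abs{\Fo_d \Kx_j}^2 \leq B$ almost everywhere. For $\signal \in C_c^\infty(\R^d)$ the transform $\Fo_d \signal$ is Schwartz, so the partial sums $\sum_{j \leq N} \abs{\Fo_d \Kx_j}^2 \Fo_d \signal$ are dominated by $B \abs{\Fo_d \signal} \in L^2(\R^d)$; dominated convergence gives convergence in $L^2(\R^d)$ to $(\Fo_d \Phi)\, \Fo_d \signal$, and Plancherel transfers this back to convergence of $\sum_{j \in \N} \Kx_j^\herm \astx \signal_j$ to $\Phi \astx \signal$ in $L^2(\R^d)$. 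This simultaneously confirms that $\Phi \astx \signal$ is a well-defined $L^2$ function, even though $\Phi$ itself need not be integrable.
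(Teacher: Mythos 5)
Your proposal is correct and takes essentially the same route as the paper: the paper's proof of this corollary is simply ``similar to the proof of Theorem~\ref{thm:cfd}'', which unfolds to exactly your two steps --- the dual reciprocity principle (Proposition~\ref{prop:reciD}) for \eqref{eq:rep2-m}, and a Fourier-domain computation of the frame-type operator $\signal \mapsto \sum_{j} \Kx_j^\herm \astx \Kx_j \astx \signal$ for \eqref{eq:rep1-m}, using the frame bounds from Lemma~\ref{lem:frame}. Your careful treatment of the $L^2$-convergence of the series corresponds to what the paper defers to Remark~\ref{rem:convergence}.
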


\begin{proof}
Similar to the proof of Theorem~\ref{thm:cfd}.
\end{proof}

Note that the series  in \eqref{eq:imd3a} and \eqref{eq:rep1-m} are both absolutely convergent in  $L^2(\R^d)$; see Remark~\ref{rem:convergence}. From Corollary \ref{cor:cfd} it follows that any function $\signal \in C_c^\infty ( \R^d)$ can be recovered from data $\Wo \signal$ by means of the following consecutive steps:
\begin{itemize}
\item Solve equation \eqref{eq:rep2-m} for $\signal_j$,
\item Evaluate the  series on the right hand side of \eqref{eq:rep1-m},
\item  Solve  the deconvolution problem \eqref{eq:rep1-m}  for $\signal$.
 \end{itemize}
Since $(\ky_\la)_{\la \in \La} $ is a convolution frame, $\Fo_d \Phi$ is bounded away from zero and thus the deconvolution problem \eqref{eq:rep1-m} is stably solvable for the unknown $\signal$. Because the inversion of the wave equation is likewise stable in the full data case, \eqref{eq:rep2-m} can be stably solved for $\signal_j$ as well. However, in the compressed sensing case, this is not the case and we have to incorporate additional prior information to solve these equations.

From Theorem~\ref{thm:imd} we also conclude that the following reasonable strategy can be implemented for CSPAT. Given wave data $\Wo \signal =g$ can be divided into a low-frequency part $ \ky_0 \astt g$ and a high-frequency part $ \sum_{j \geq 1}  \ky_j \astt g$. For the low-frequency part, a standard reconstruction can be employed without any need for regularization. The high-frequency part on the other hand can be reconstructed using CS recovery algorithms. The final fusion is then performed using the dual filters $u_j^\plus$.  An advantage of  approaches based on Theorem~\ref{thm:imd} or Corollary~\ref{cor:cfd} is that they directly lead to sparse elements, while the latter approach requires repeated application of frame analysis and synthesis during iterative CS recovery algorithms.

\begin{remark}[Examples for multiscale decompositions]
Possible multiscale decompositions  can be constructed via a dyadic translation invariant wavelet frame, which names a convolutional frame $(\psi_j)_{j \in \Z}$ where  $\psi_j \coloneqq   2^{j} \psi ( 2^{j} (\edot) )$ for a so-called mother wavelet $\psi \colon \R \to \R$. If we define $\ky_{j} \coloneqq \psi_j$ for $j \geq 1$ and select $\kylow$ such that  $\abs{\Fo_1 \kylow}^2 =  \sum_{j \leq 0} \abs{\Fo_1 \psi_j}^2$  we obtain  a multiscale decomposition $ (\Kx_j) _{j \in \N}$. Alternatively, for the low resolution filter we can   take any other function $\kylow$ such that $\abs{\Fo_1 \kylow}^2 +  \sum_{j \geq  1} \abs{\Fo_1 \psi_j}^2$ is  bounded and away from zero by reasonable constants. Several examples of dyadic translation invariant wavelet frames can be extracted from classical wavelet analysis \cite{daubechies1992ten,mallat2009wavelet}.

Feasible multiscale decompositions can be constructed via a dyadic translation invariant wavelet frame, which denotes a convolutional frame $(\psi_j)_{j \in \Z}$ where $\psi_j \coloneqq 2^{j} \psi ( 2^{j} (\edot) )$ is defined by a so-called mother wavelet $\psi \colon \R \to \R$. If we set $\ky_{j} \coloneqq \psi_j$ for $j \geq 1$ and choose $\kylow$ such that $\abs{\Fo_1 \kylow}^2 = \sum_{j \leq 0} \abs{\Fo_1 \psi_j}^2$ we obtain a multiscale decomposition $ (\Kx_j) _{j \in \N}$. More precisely, \cite[Theorem~1]{chui1993inequalities} implies that if the family $(2^{-j/2}\psi_j(\edot - 2^{-j} k))_{j, k \in \Z}$ is a wavelet frame of $L^2(\R)$, then $(u_j)_{j \in \Z}$ is a multiscale decomposition. Alternatively, for the low-resolution filter, we can take any other function $\kylow$ such that $\abs{\Fo_1 \kylow}^2 + \sum_{j \geq 1} \abs{\Fo_1 \psi_j}^2$ is bounded away from zero by a proper constant. Numerous examples of dyadic translation-invariant wavelet frames can thus be extracted from a classical wavelet analysis \cite{daubechies1992ten,mallat2009wavelet}. Among many others, examples of the generating mother wavelet $\psi = \ky_{\low}$ are the Mexican-Hat wavelet, the Shannon wavelet, the Spline wavelet or the Meyer wavelet. For our numerical simulations, we use the Mexican-Hat wavelet as an arbitrary choice.
\end{remark}

\section{Application to compressed sensing PAT}
\label{sec:cspat}

In this section, we extend the multiscale factorization to the case of compressed sensing data in PAT. We also derive a corresponding sparse recovery scheme.

\subsection{Sampling the wave equation}

In the implementation of any PAT setup, the acoustic data can only be acquired for a finite number of sample points, which we denote by $z_\ell \in S$ for $\ell \in \{1, \dots , n \}$. Note that we do not discretize the temporal variable, since temporal samples can easily be recorded at a rate well above the Nyquist sampling rate.

\begin{definition}[Sampled PAT forward operator]
Let $\Wo$ be the continuous PAT forward operator defined in \eqref{eq:Wo}. For sampling points  $ \zz_1, \dots, \zz_n \in \surface$, we set
\begin{align}
&\So_n \colon C^\infty (\surface \times (0,\infty)) \to  (C^\infty  (0,\infty) )^n \colon g   \mapsto  \left(  g (\zz_\ell, \, \cdot\, ) \right)_{\ell= 1, \dots, n }
\\ \label{eq:Wn}
 &\Wo_n \colon C^\infty_c (\R^d) \rightarrow   (C^\infty  (0,\infty) )^n
    \colon \signal   \mapsto    \So_n   \Wo \signal  = \left(  \Wo \signal (\zz_\ell, \, \cdot\, ) \right)_{\ell = 1, \dots , n   } \,.
\end{align}
We call  $\So_n$ the regular sampling scheme and  $\Wo_n = \So_n   \Wo$ the (regularly) sampled  PAT forward operator corresponding to the $n$-tuple $( \zz_\ell )_{\ell = 1, \dots , n }$ of spatial sampling points.
 \end{definition}

The fundamental question of classical sampling theory in the context of PAT is to find ``simple'' and ``reasonable'' sets $\X$ to which the initial pressure belongs, and corresponding conditions on the sampling points under which the sampled data $\Wo_n \signal$ uniquely and stably determine the initial pressure distribution $\signal \in \X$. For equidistant detectors located on the boundary of a circular disk $D \subseteq \R^2$, explicit sampling conditions for PAT have been derived in \cite{haltmeier2016sampling}.  Roughly speaking, these results state that any function $\signal \in C^\infty_0( D )$ whose Fourier transform $\hat \signal (\xi) $ is sufficiently small for $\norm{\xi} \geq b_0$, where $b_0$ is the essential bandwidth, can be stably recovered from sampled PAT data $\Wo_n \signal$, provided that the sampling condition $n \geq 2 R_0 b_0$ is satisfied (for a precise statement, see \cite{haltmeier2016sampling}).  Sampling theory for other tomographic inverse problems is treated, for example, in \cite{stefanov2020semiclassical,katsevich2017local,Far06,desbat1993efficient,natterer95sampling,nguyen2020sampling}.

{
We have the impression that sampling theory in PAT has not yet received as much attention as it would deserve.  It is both of practical relevance and of mathematical interest. However, only a few special cases exist in which it has actually been solved. Since the correct sampling of the forward operator is, in a sense, the starting point of our compressive approach, we would like to discuss this topic in some more detail here.
To our knowledge, \cite{haltmeier2016sampling} is the earliest reference explicitly dealing with the sampling of the PAT operator $\Wo$. The analysis presented there applies to the case where the measurement surface is a circle in 2D.  The derivation closely follows the presentation of the sampling theory for the 2D Radon transform in \cite[Section III.3]{natterer1986computerized}. The main step in this approach is to estimate the essential support of the 2D Fourier transform of $\Wo \signal$, see \cite[Theorem~6]{haltmeier2016sampling}. In particular, 2D sampling schemes based on classical Shannon sampling theory are derived. Using non-uniform sampling theory \cite{landau1967necessary}, one could derive non-uniform sampling schemes from the support estimate. Such studies in the context of PAT would be an interesting line of research. Furthermore, the extension of the approach to the case of general detection surfaces as well as to higher dimensions seems to be interesting. We mention at this point a recent work \cite{mathison2020sampling} along this line, based on microlocal analysis.}

\subsection{Compressive sampling}

In order to reduce the number of detectors while maintaining spatial resolution, CSPAT has been investigated in several works \cite{arridge2016accelerated,guo2010compressed,haltmeier2016compressed,provost2009application,sandbichler2015novel}.  The basic idea is to use general linear measurements of the form
\begin{align}\label{eq:CS-measurements}
\data_j = \inner{a_j} {\Wo_n (\signal)} = \sum_{i=1}^n a_{j, i} (\Wo_n (\signal))_i
 \quad \text{ for } j \in \left\{1,\dots, m \right\} \,.
\end{align}
Here $a_j$ are measurement vectors with entries $a_{j,i}$, and $\Ao_{m,n} \coloneqq (a_{j,i})_{j,i} \in \R^{m \times n}$ is the measurement matrix. The term compressed sensing refers to the fact that the number of measurements $m$ is to be chosen much smaller than the number of initial sampling points $n$. Therefore, $\data = \Ao_{m,n} \Wo_n \signal$ is a highly underdetermined linear system of equations and can only be solved with additional information on the unknown to be recovered.

For a systematic treatment, we introduce the following notation.

\begin{definition}[Generalized PAT sampling] \label{def:fwd-cs}
For sampling points $\zz_1, \dots, \zz_n  \in \surface$, measurement matrix $\Ao_{m,n} \in \R^{m \times n}$ and subspace  $\X \subseteq C_c^\infty(\R^d)$ we call
\begin{enumerate}
\item $\Ao_{m,n} \So_n$ a generalized sampling scheme;
\item $\Ao_{m,n} \Wo_n = \Ao_{m,n} \So_n \Wo $ CSPAT forward operator;
\item $\Ao_{m,n}\So_n$ a complete sampling scheme for $\X$, if the restriction $\Ao_{m,n} \So_n \Wo|_{\X}$ is injective.
\end{enumerate}
\end{definition}

The results of \cite{haltmeier2016sampling} basically show that $\So_n$ provides a complete sampling scheme on the space $V = V_{R_0, b_0}$ of all functions supported in a disk of radius $R_0$ and having a essential  bandwidth $b_0$ given $n \geq 2 R_0 b_0$ equally distributed sampling points. This implies that for any invertible matrix $\Ao_{m,n}$ the composition $\Ao_{m,n} \So_n$ is also a complete sampling scheme on $V_{R_0,b_0}$. We are not aware of any  available results if the generalized sampling scheme cannot be written in the form $\Bo \So_m$, where $\So_m$ is a regular sampling scheme and  $\Bo  \in \R^{m \times m}$  is invertible. The question for which spaces general sampling matrices provide complete sampling schemes seems to be an interesting line of open research. Anyway, in this paper we study the case where $\Ao_{m,n}  \Wo_n$ is not injective on the linear subspace $\X$ and develop a nonlinear reconstruction approach based on $\ell^1$-minimization.

\subsection{Multiscale reconstruction for CSPAT}
\label{ssec:ms}

Let    $\Wo_n = \So_n   \Wo$ be  a regularly sampled  PAT forward operator with sampling points $\zz_1, \dots, \zz_n  \in \surface$. We suppose that the regular sampling scheme $\So_n$ is complete for a subspace  $\X_n \subseteq  C_c^\infty (\R^d)$.
Moreover, let $\Ao_{m,n} \in \R^{m \times n}$ be a measurement matrix with $ m < n$ such $\Ao_{m,n} \So_n$ is not complete on $\X_n$. This means that $\Ao_{m,n} \Wo_n$ is not injective on $\X_n$ and  therefore cannot be uniquely inverted. Our aim  is to nevertheless to recover $\signal \in \X_n$ from data $\data = \Ao_{m,n} \Wo_n \signal $ by using  suitable prior information.

In the following, we describe how a multiscale factorization for the wave equation can be used to recover the initial pressure from CSPAT data. The main ingredient of the approach is that the factorizations of Theorem~\ref{thm:imd} and Corollary~\ref{cor:cfd} for the full wave equation generalize to the compressed sensing setup. We formulate here only one such extension based on Corollary~\ref{cor:cfd} because we will use this version for the actual numerical implementation.

\begin{proposition}[Multiscale CSPAT decomposition]\label{cor:csrec}
Let $( \conv{\Kx}, \conv{\ky})$ be a multiscale factorization of $\Wo$ and set $\Phi  \coloneqq     \Fo_d^{-1} \bigl(   \sum_{ j  \in \N }     \abs{\Fo_d \Kx_j  }^2  \bigr)$. For all $\signal \in C_c^\infty (\R^d)$ the factors $\signal_j = \Kx_j \astx \signal$ satisfy
\begin{align} \label{eq:csrec1}
	\Phi \astx  \signal &= \sum_{j \in \N}   (\Ro^\sharp \ky_j)  \ast \signal_j
    \\
    \label{eq:csrec2}
	(\Ao_{m,n} \Wo_n)  \signal_j &=    \ky_j \astt ( \Ao_{m,n} \Wo_n  \signal ) \,.
\end{align}
\end{proposition}

\begin{proof}
This follows from Corollary \ref{cor:cfd} by using that  $\Ao_{m,n} \So_n$ acts in the spatial variable and therefore commutes  with the temporal convolution.
\end{proof}

\subsection{Proposed multiscale reconstruction}
\label{ssec:MSrecon}

Consider the situation as stated in Subsection~\ref{ssec:ms}. Based on the factorization of Corollary~\ref{cor:csrec}, we propose the following scheme for reconstructing an initial pressure from  CSPAT data :

\begin{alg}[Reconstruction of the initial pressure $\signal$ from CSPAT data $\Ao_{m,n} \Wo_n \signal$]\label{alg:cs}\mbox{}
\begin{enumerate}[label=(S\arabic*), leftmargin=4em]
\item \label{rem:s1} Solve equation \eqref{eq:csrec2} for $\signal_j$ as described below.
\item \label{rem:s2} Evaluate the  series on the right hand side of \eqref{eq:csrec1}.
\item \label{rem:s3} Solve  the deconvolution problem \eqref{eq:csrec1} for $\signal$.
 \end{enumerate}
\end{alg}

Since $(\ky_j)_{j \in \N} $ is a convolutional frame, $\Fo_d \Phi$ is bounded away from zero and therefore the deconvolution problem \eqref{eq:csrec1} in step~\ref{rem:s3} is stably solvable. However, since $\Ao_{m,n} \Wo_n$ is not injective, solving \eqref{eq:csrec2} in step~\ref{rem:s1} requires the use of available prior information.  The proposed  solution procedure is described in the following Remark~\ref{rem:ss}.

\begin{remark}[Solution of step~\ref{rem:s1}] \label{rem:ss}
Reasonable prior information differs for the low-frequency factor $f_0$ and the high-frequency factors $\signal_j$ for $j \geq 1$. Therefore, we recover the low-frequency factor and the high-frequency factors in different ways.
\begin{itemize}
\item \textsc{Low-frequency factor:}  Assuming that the low-frequency filter $ \ky_0$ has essential bandwidth in  $b_0$, then  low-frequency factor $ f_0 = \Kx_0 \astx  \signal$ also has essential bandwidth $b_0$. This means that the Fourier transform $\Fo_d f_0(\xi)$ is sufficiently small for $\norm{\xi} \geq b_0$. Therefore, classical sampling theory in the context of PAT \cite{haltmeier2016sampling} suggests that $m$ regular samples are sufficient to recover $ f_0 = \Kx_0 \astx  \signal$ as solution of
\begin{equation} \label{eq:ell2}
	\min_{h \in \X_n}
	\norm{h}_2  \quad \text{ such that }
	\Ao_{m,n} \Wo_n h =  \data_0  \,.
\end{equation}
We will use \eqref{eq:ell2}  when $\Ao_{m,n} \in \R^{m \times n}$ is a  either a  subsampling matrix or random sensing matrix.

\item
\textsc{High-frequency factors:} Under the assumption that the Fourier transform $\Fo_1 \ky(\omega)$ is negligible in a suitable sense if $\abs{\omega}$ is outside the interval $[b_0 , 2 b_0]$, then the Fourier transforms  $\Fo_d \Kx(\xi)$ of high-frequency factors are negligible outside the ring $D_j \coloneqq \set{\xi \in \R^d \mid 2^{j-1} b_0 \leq \norm{\xi} \leq 2^j b_0}$. However, if we perform compressive sampling, then $\Ao_{m,n} \Wo_n$ is not injective on the space of all functions whose Fourier transform is essentially supported in $D_j$, and therefore \eqref{eq:csrec1} cannot be uniquely inverted without additional prior information. As we can observe from Figure~\ref{fig:sparse}, the high-frequency factors $f_j = \Kx_j \astx \signal$ are sparse in the spatial domain. Therefore, in this work, we propose to use $\ell^1$-minimizing solutions
\begin{equation} \label{eq:ell1}
	\min_{h \in \X_n}
	\norm{h}_1 \quad \text{ such that }
	\Ao_{m,n} \Wo_n h = \data_j \,.
\end{equation}
Here $\norm{h}_1 \coloneqq \sum_{i \in \Z^2 } \abs{ h( x_i ) }$ the $\ell^1$-norm of $h$ with discrete samples $x_i \in \R^2$.
\end{itemize}
\end{remark}

We can infer uniqueness of \eqref{eq:ell1} based on recovery conditions of compressed sensing. Such results also imply stable recovery in the case of approximately sparse signals and noisy data, with \eqref{eq:ell1} replaced by relaxed versions. The standard condition that guarantees stable recovery of sparse signals is the restricted isometry property (RIP). The RIP guarantees uniform recovery of all sufficiently sparse signals \cite{candes2008restricted}, which is unexpected for the specific sampling matrices considered here, where compressive measurements are performed only in the spatial dimension. In such a situation, we can resort to the results of \cite{grasmair2011necessary}, dealing with the reconstruction of individual elements. A detailed  error analysis for CSPAT with Algorithm~\ref{alg:cs} using \eqref{eq:ell2} for the low resolution factor and \eqref{eq:ell1} for the high resolution factors is an interesting line of future research and beyond the scope of this paper.

\section{Numerical experiments}
\label{sec:num}

In this section, we present details on the implementation of the sparse reconstruction scheme from CSPAT measurements presented in Section~\ref{sec:cspat}. In our numerical experiments, we consider the two-dimensional case when the initial pressure is supported in the circular disk in $\R^2$ with radius $0.9$ and the measurements are performed in the unit sphere $\sph^1$. This situation occurs in PAT with integrating line detectors~\cite{burgholzer2007temporal,finch2007spherical,paltauf2009characterization}.

\begin{figure}[htb!]
	\centering
	\includegraphics[width=0.48\textwidth]{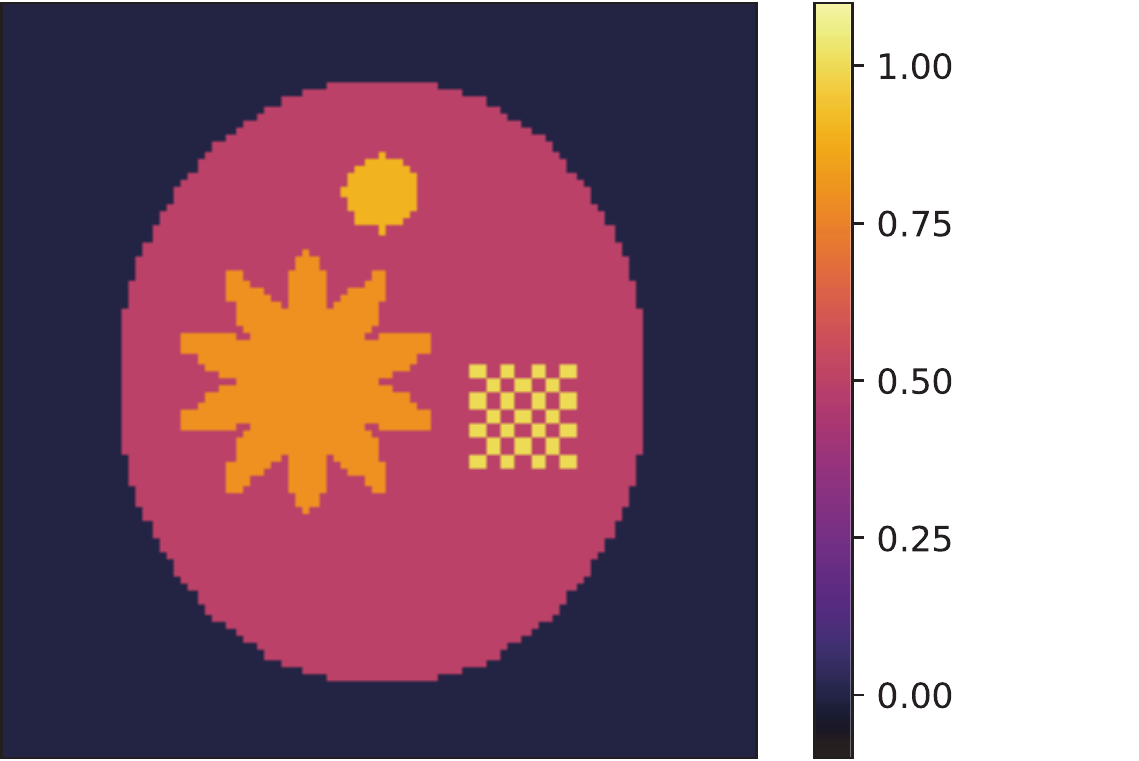}\quad
	\includegraphics[width=0.48\textwidth]{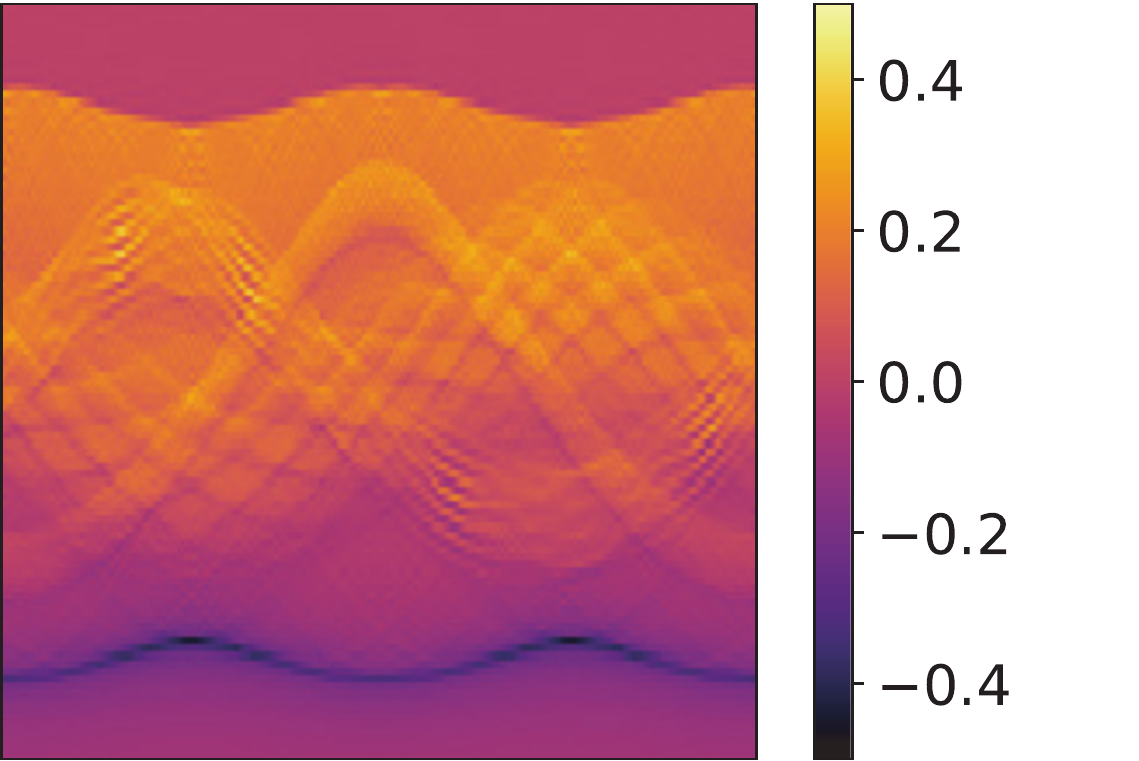}\\[1em]
	\includegraphics[width=0.48\textwidth]{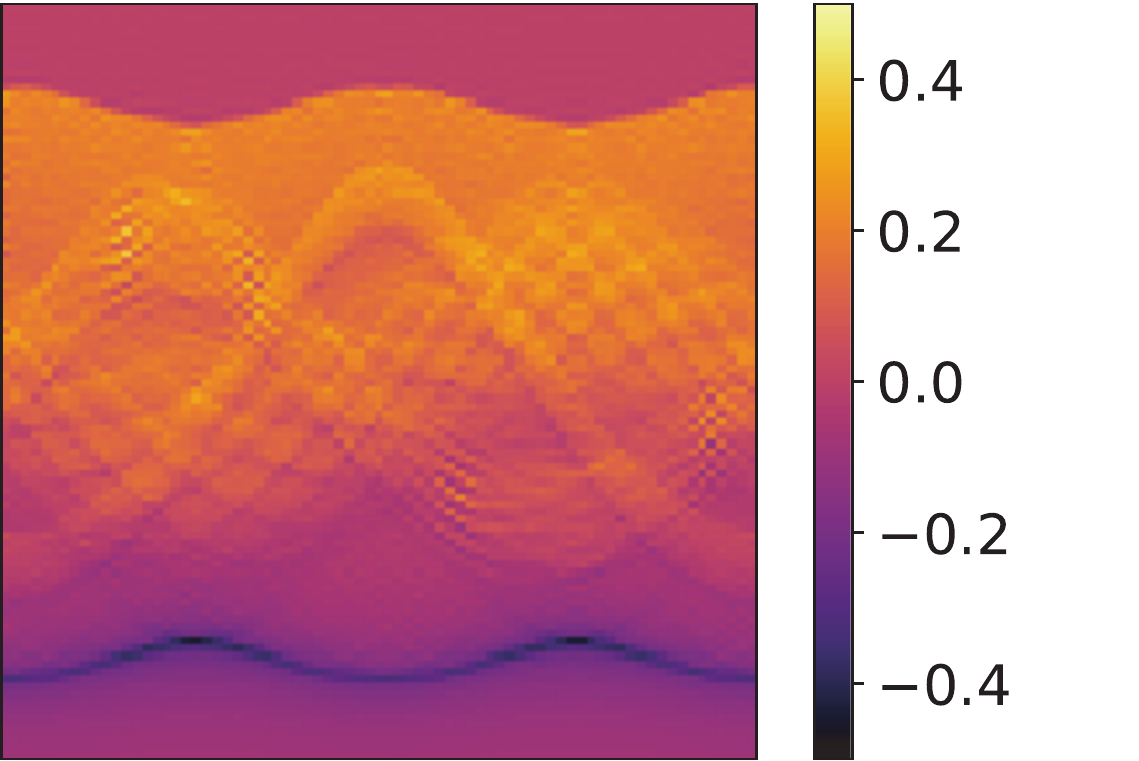}\quad
	\includegraphics[width=0.48\textwidth]{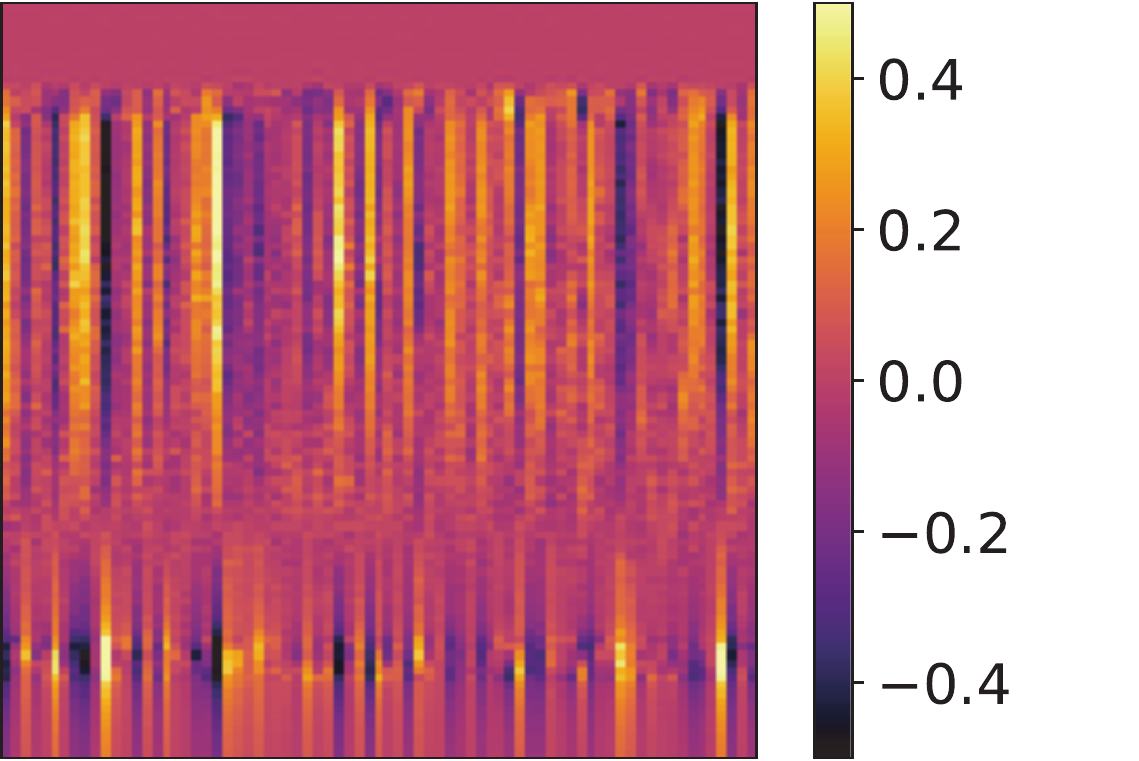}
	\caption{Left: Initial pressure on a square grid of side length $2$. The detectors are equidistantly distributed on the black circle. Right: Full data.}
	\label{fig:data}
\end{figure}

\subsection{Numerical implementation} \label{sec:numdatasim}

For all presented numerical implementations, we represent $\signal$ by its discrete values $ (\signal(x_i))_{i}$ on a Cartesian grid of side length $2$ at nodes $x_i = i \, 2/\Nx$ for $i \ in \{-\Nx/2, \dots,  \Nx/2 -1 \}^2$ with $\Nx = 100$.  The space $\X_n$ is taken as the space of all piecewise bilinear functions with nodes at $x_i$ whose values vanish outside the disk of radius $0.9$.  We implement $\Wo_n$ and $\Ao_{m,n}$ with $n=300$ and $m=75$ as described below. We assume that $\signal$ is sampled at the Nyquist rate such that  the maximal Bandwidth is given by $\Omega \coloneqq  \Nx (\pi /2)$.  Note that the fully sampled PAT forward operator $\Wo_n$ satisfies the classical sampling conditions.    The measurement matrix $\Ao_{m,n}$ corresponds to a subsampling factor of 4.

\begin{itemize}
\item \textsc{Sampled PAT forward operator and adjoint:}
The discretization of $\Wo_n$ is based on the Fourier representation $ \Fo_d \, p ( \xxi , t) = \cos ( \norm{\xxi} t ) \Fo_d \signal (\xxi)$ for the solution of the wave equation \eqref{eq:wavee1}-\eqref{eq:wavee3}.  For the numerical computations, we replace the Fourier transform by the discrete Fourier transform on the square grid of side length $4$ with spatial nodes $\xx_{ i } = i \, 2/\Nx$ for $i \in \{- \Nx , \dots , \Nx -1 \}^2$ and frequency nodes $\xxi_k = k \, \Omega / \Nx$ for $ k \in \{-\Nx, \dots , \Nx - 1 \}^2$. Here, the bandwidth $\Omega$ and the spatial sampling step size $2/\Nx$ satisfy the Nyquist condition $2/\Nx = \pi /\Omega$ and the larger numerical domain $[-2, 2] \times [-2, 2] $ is chosen to avoid boundary effects. We then define the discrete fully sampled PAT forward operator $\Wo_n$ by nearest neighbor interpolation at the detector locations.  The adjoint $\Wo_n^\herm$ is numerically computed using the backprojection algorithm described in \cite{burgholzer2007temporal}.

\item  \textsc{Multiscale filters:}
The high-frequency filters $\ky_j$ for $ \geq 1$ are chosen as Mexican Hat wavelets
\begin{equation*}
\ky_j(t)
\coloneqq 8 \cdot 2^{j} (1 - (2^j 8 t)^2 )
 \exp \left( -\frac{ ( 8 \cdot 2^j t)^2}{2} \right)
\end{equation*}
and the corresponding low-frequency temporal filter $ \kylow$ is taken as the Gaussian kernel $\kylow(t) \coloneqq 8 \exp ( - (8t)^2/2 )$.   The width of the filter $\kylow$ has been chosen such that $\kylow \astx \signal$ can be recovered from $n = 75$ samples according to classical sampling theory. For the high-frequency components $\ky_j \astx \signal$ with $j \geq 1$ this is not the case, and therefore we use sparsity as outlined in Section~\ref{sec:cspat}. The spatial filters $\Kx_j = \Ro^\sharp \ky_j$ are computed analytically by evaluating \eqref{eq:reciD} with $d=2$ and $\ky = \ky_j$. For $j \geq 2$, the essential support $\ky_j$ lies outside the considered frequency regime $[-\Omega, \Omega]$ and therefore we restrict to the three filters $\kylow, \ky_1, \ky_2$ for the numerical simulations. All temporal and spatial convolutions are replaced by discrete convolutions computed via the discrete Fourier transform.

\item \textsc{Measurement matrix:}
For the  measurement matrix $\Ao_{m,n} \in   \R^{m \times n}$ we consider two choices.  First, we  take $\Ao_{m,n}$ as uniform subsampling matrix which has entries  $a_{j,i} = 1$ if  $j= 4(i-1)+1$ and  $a_{j,i} = 0$ otherwise. Second we take $\Ao_{m,n}$ as Gaussian random matrix where each entry $a_{j,i}$ is the realization of an independent Gaussian random variables with zero mean.
\end{itemize}

\begin{figure}[htb!]
\centering
\includegraphics[width=0.48\textwidth]{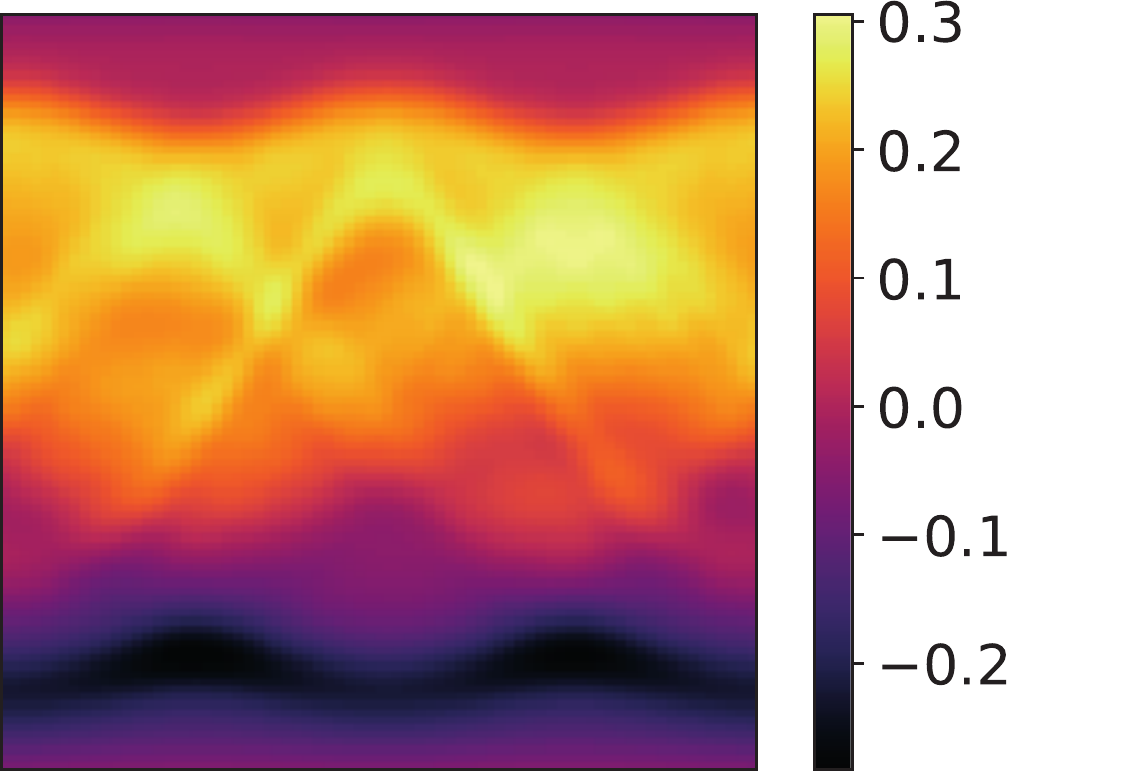}
\includegraphics[width=0.48\textwidth]{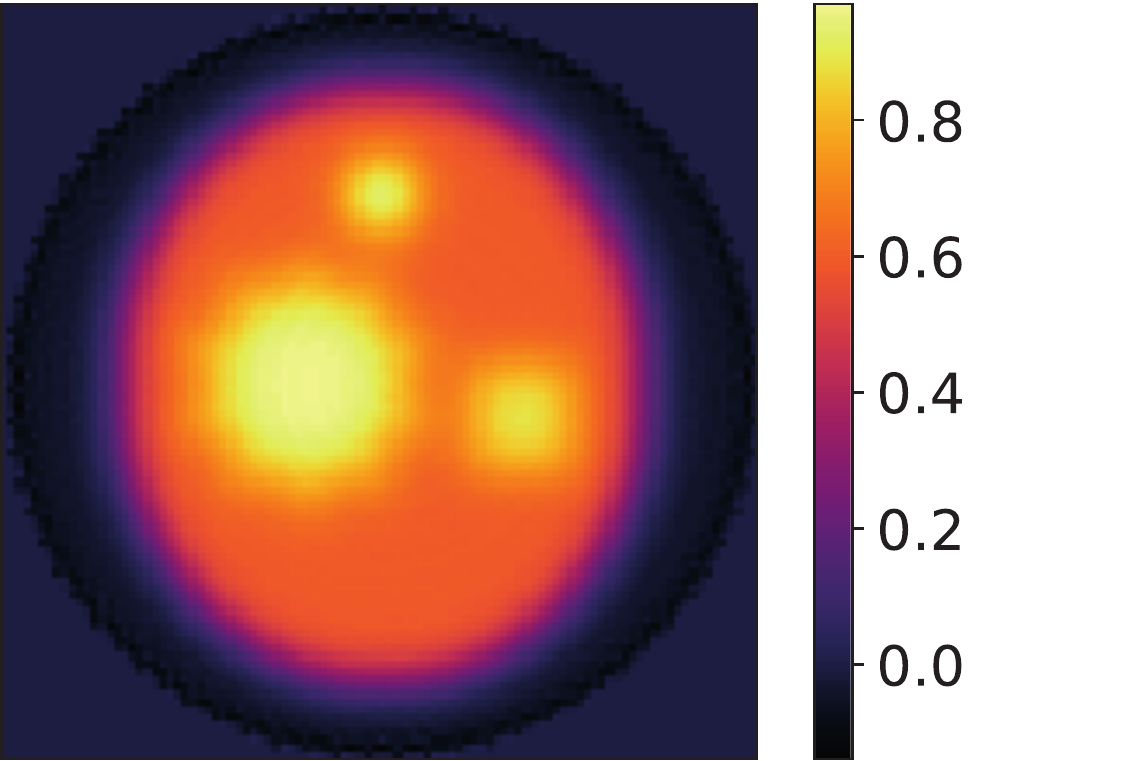}\\[1em]
\includegraphics[width=0.48\textwidth]{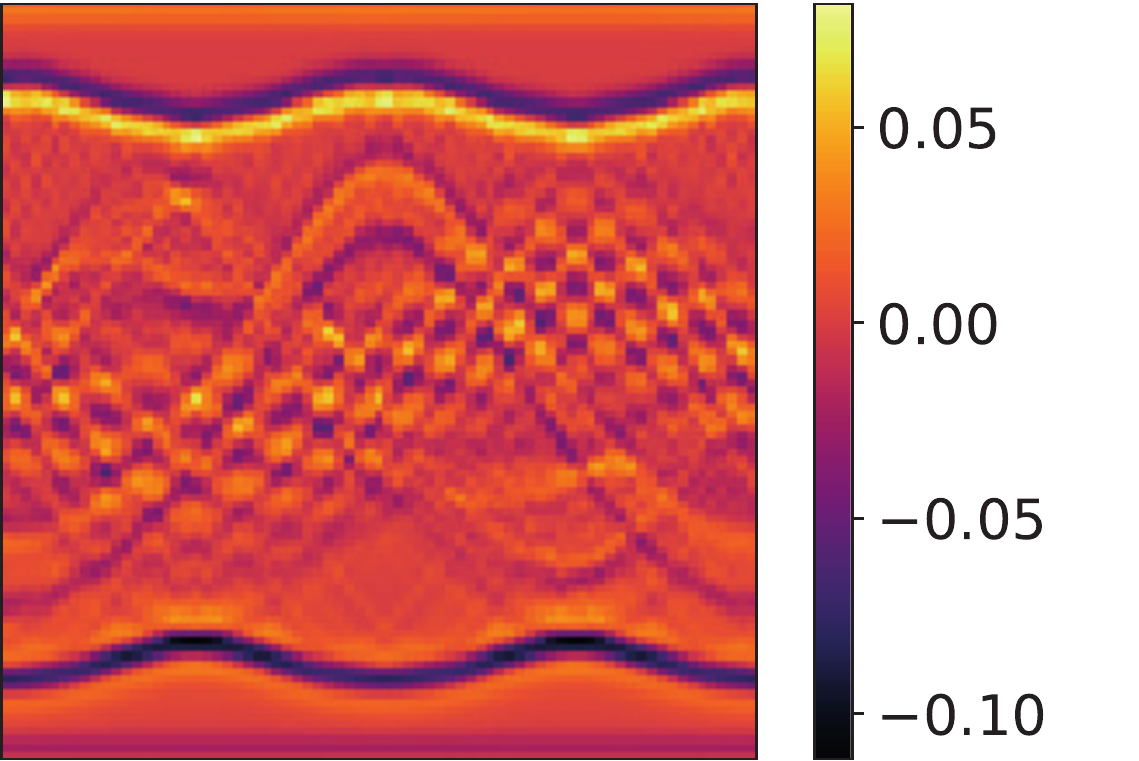}
\includegraphics[width=0.48\textwidth]{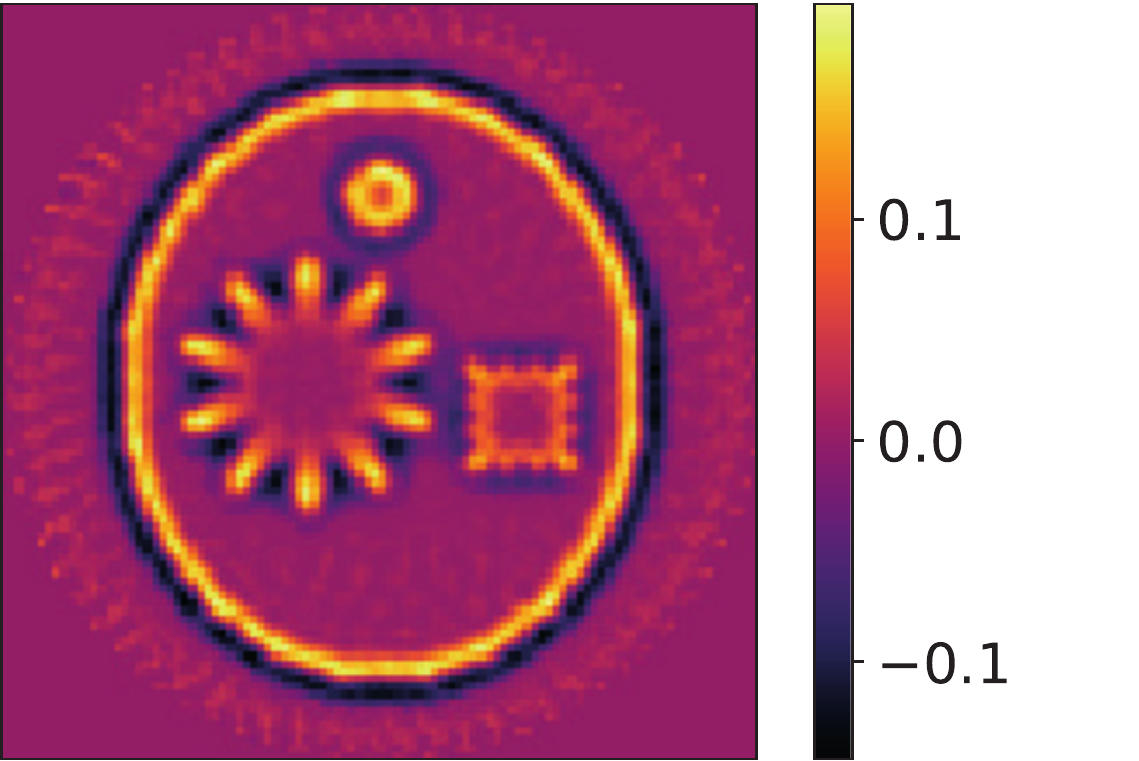}\\[1em]
\includegraphics[width=0.48\textwidth]{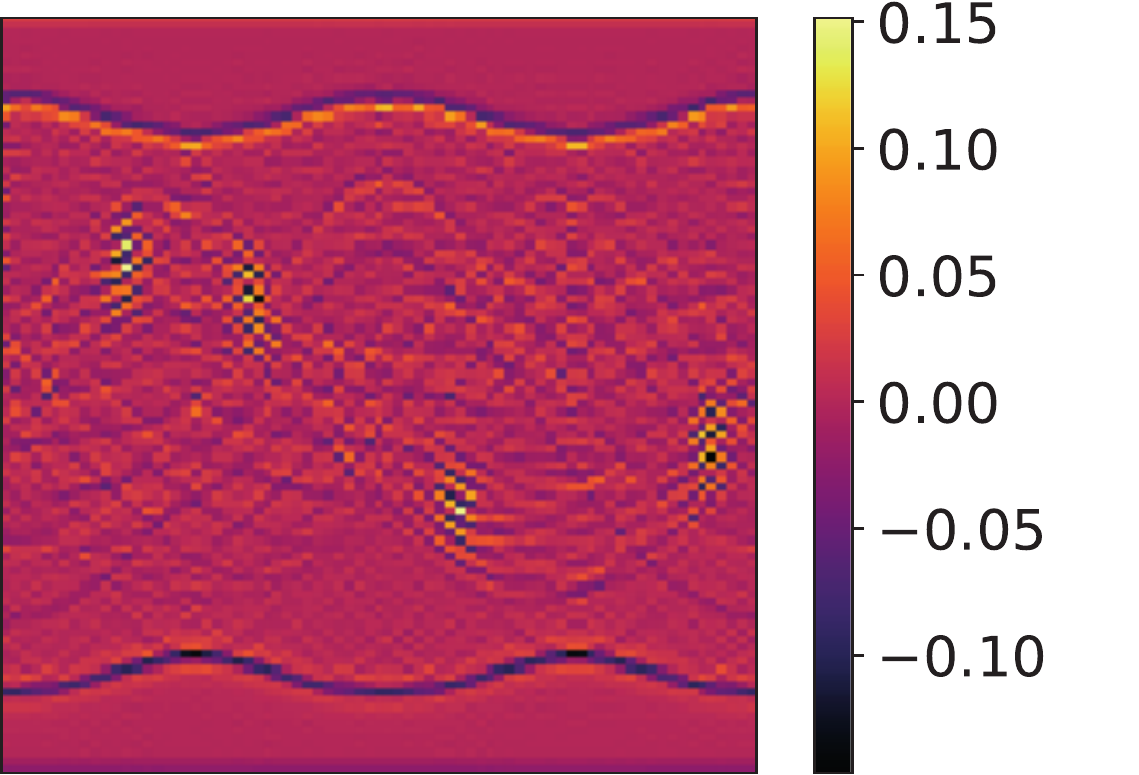}
\includegraphics[width=0.48\textwidth]{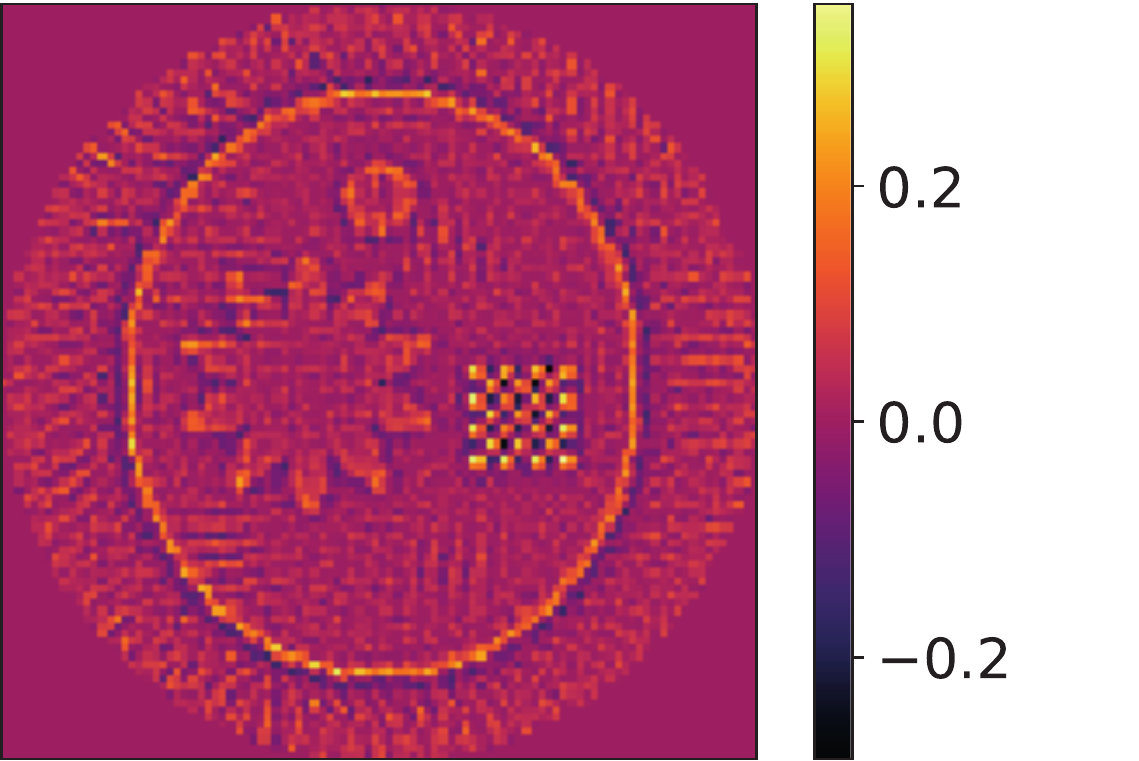}
\caption{Left: The convolved data $ \data_j = \ky_j \astt (\Ao_{m,n} \Wo_n \signal) $ for $j=0,1,2$ where $\Ao_{m,n}$ is the  subsampling matrix with subsampling factor 4. Right.  Corresponding reconstructions  of convolved initial  pressure $\Kx_j \astx \signal$ using the Landweber method (for $j=0$) and iterative soft thresholding (for $j=1,2$).}\label{fig:rec-sparse}
\end{figure}

\begin{figure}[htb!]
\centering
\includegraphics[width=0.48\textwidth]{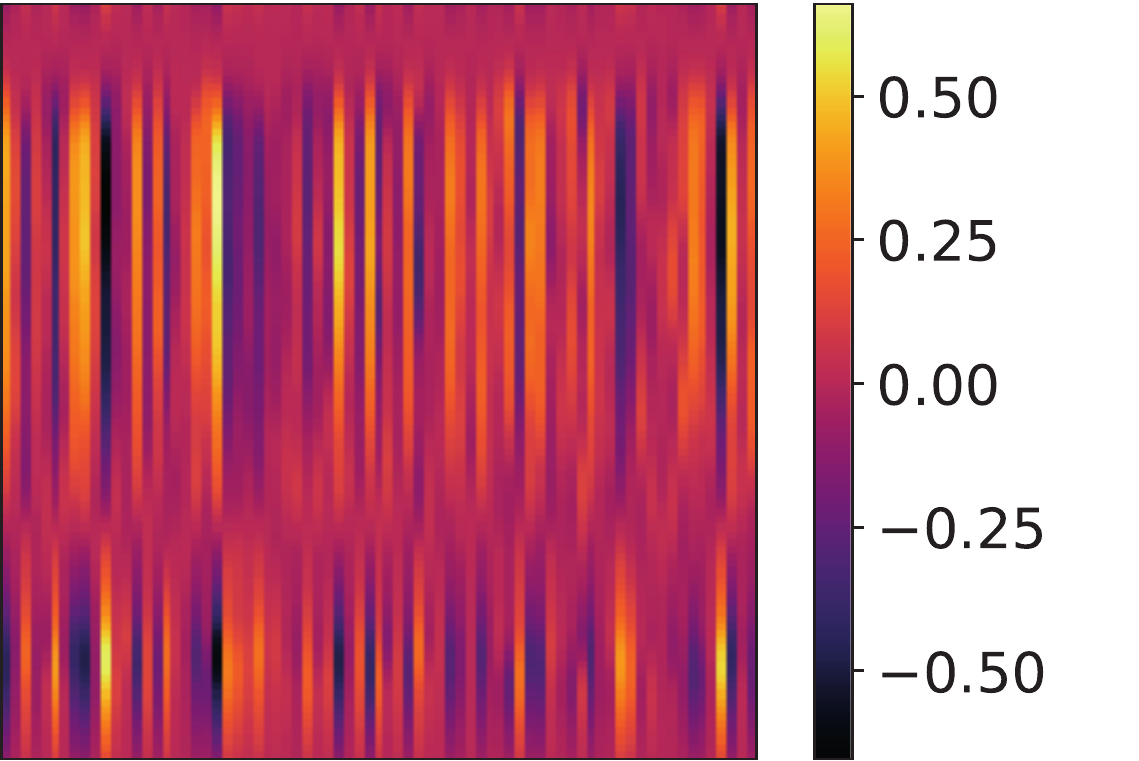}
\includegraphics[width=0.48\textwidth]{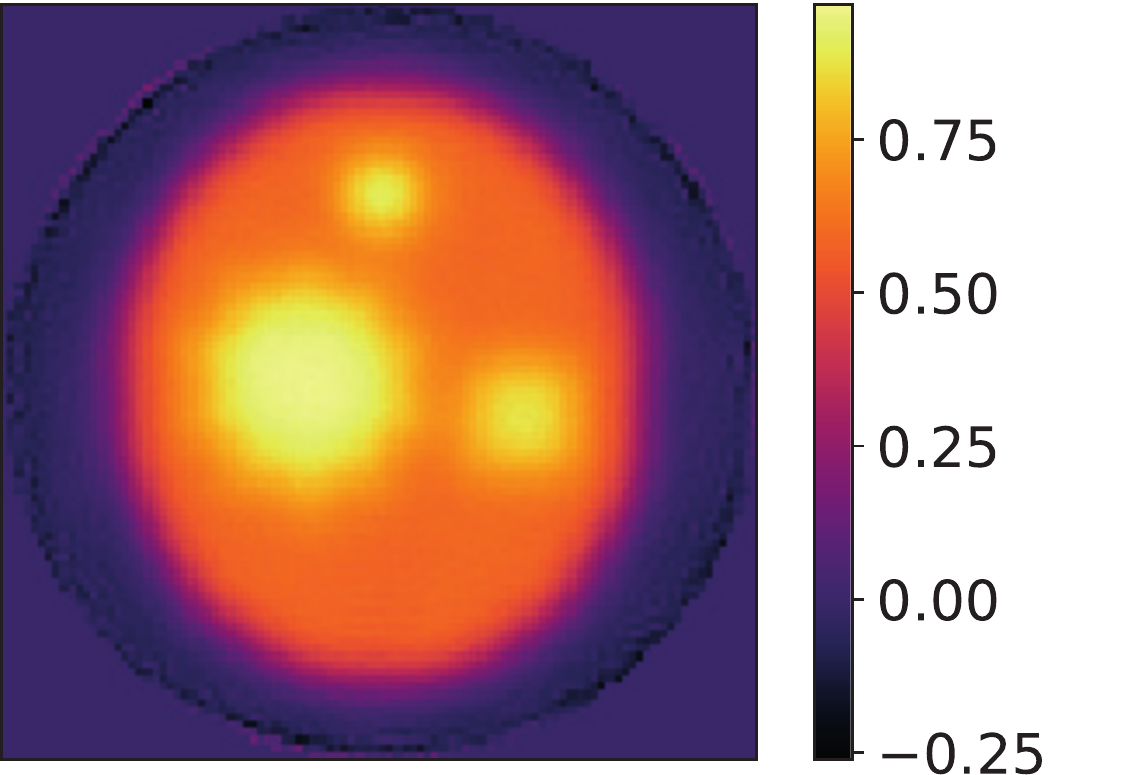}\\[1em]
\includegraphics[width=0.48\textwidth]{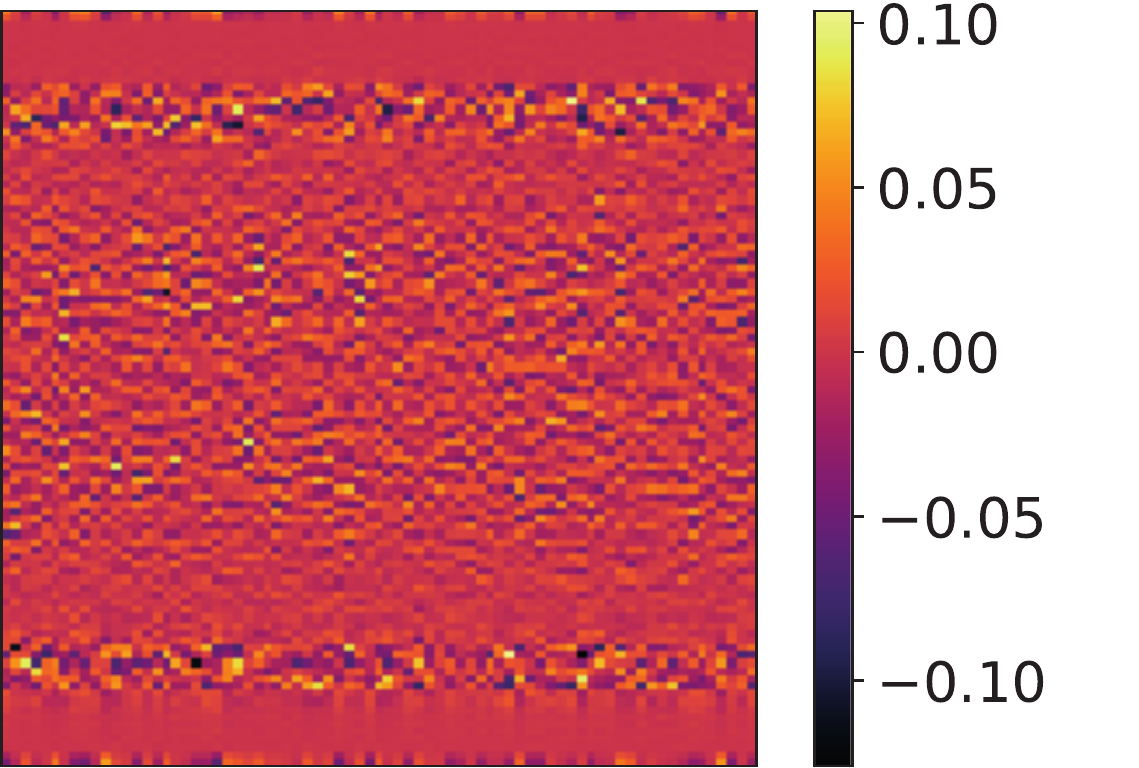}
\includegraphics[width=0.48\textwidth]{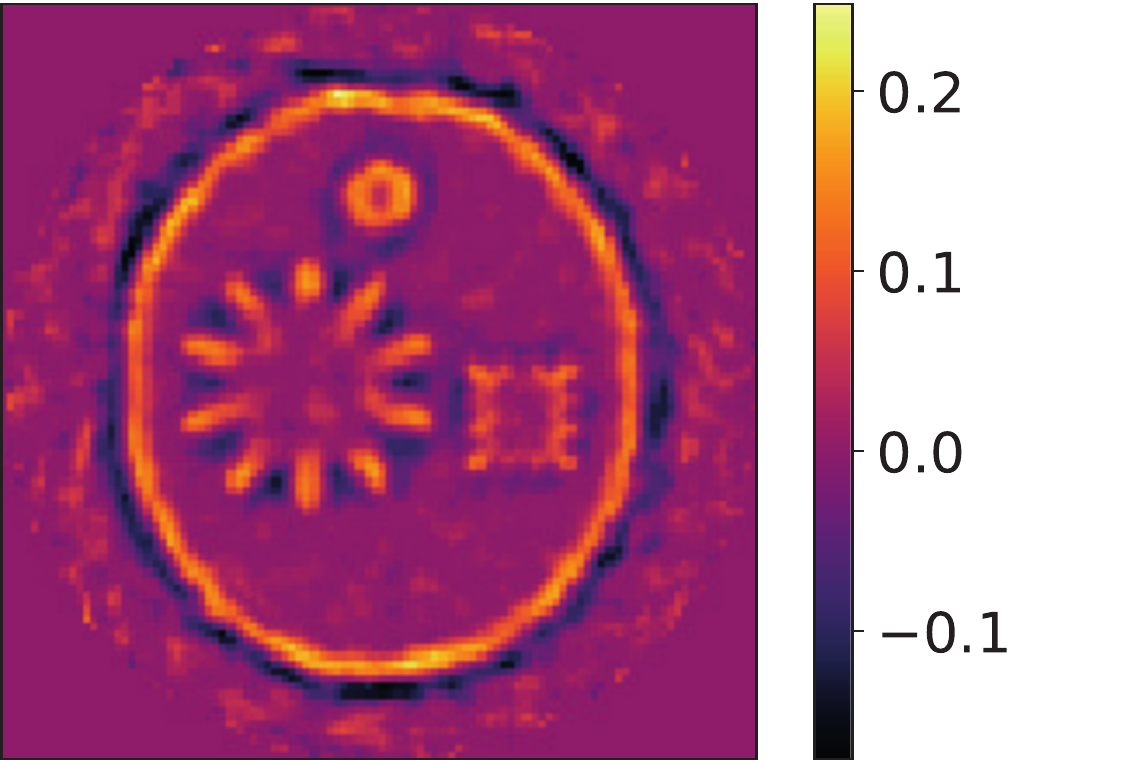}\\[1em]
\includegraphics[width=0.48\textwidth]{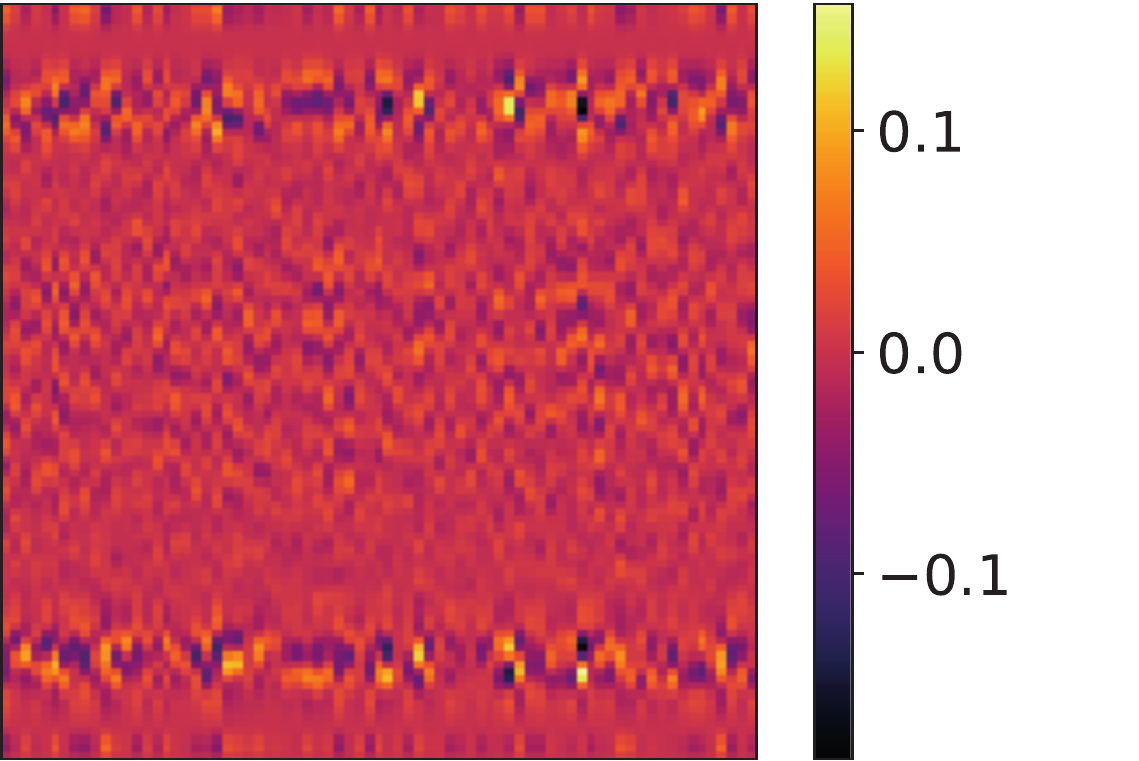}
\includegraphics[width=0.48\textwidth]{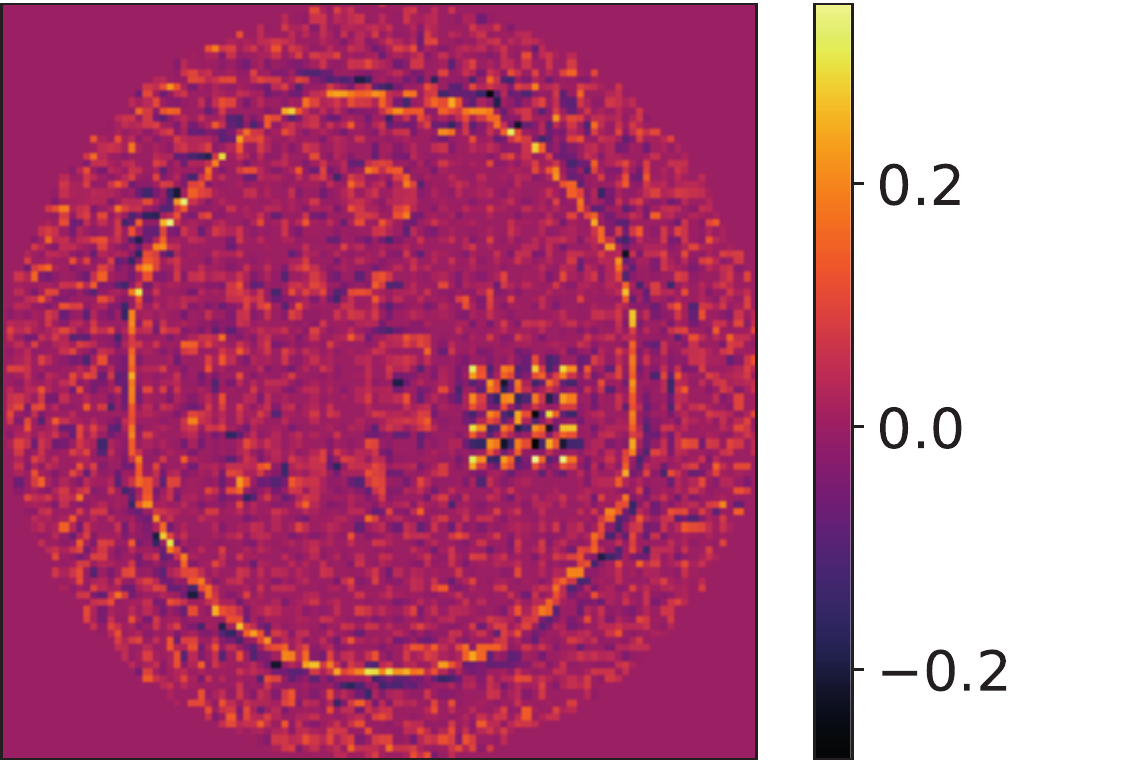}
\caption{Left: The convolved data $ \data_j = \ky_j \astt (\Ao_{m,n} \Wo_n \signal) $ for $j=0,1,2$ where $\Ao_{m,n}$ is a Gaussian random matrix with subsampling factor 4. Right.  Corresponding reconstructions  of convolved initial  pressure $ \Kx_j \astx \signal  $ using the Landweber method (for $j=0$) and iterative soft thresholding (for $j=1,2$).}\label{fig:rec-gauss}
\end{figure}

The initial pressure used for the numerical simulations,  the corresponding fully  sampled data as well as the subsampled  and the Gaussian measurement data are shown in Figure~\ref{fig:data}. The filtered data for the subsampling scheme are shown in the left column of Figure~\ref{fig:rec-sparse} and the filtered data for the Gaussian measurements are shown in the left column of Figure~\ref{fig:rec-gauss}.

\subsection{Reconstruction results}\label{sec:numrec}

Following the strategy proposed in Section~\ref{sec:cspat} (see Algorithm \ref{alg:cs} and Remark~\ref{rem:ss}),  we  recover the initial phantom via the following three steps:

\begin{itemize}
\item First recover the factors $\Kx_j \astx \signal$ from data $\data_j = \ky_j \astx  (\Ao_{m,n} \Wo_n \signal)$. For that purpose, we use the Landweber iteration for recovering the low-frequency factor $\Kx_0 \astx \signal$ and the iterative soft thresholding algorithm
\begin{equation*}
	\forall  k \in \N  \colon \quad
	h^{k+1} = \soft_{ s \lambda} \kl{ h^k  -   s  \,   \Wo_n^\herm \Ao_{m,n}^\herm\left(   \Ao_{m,n} \Wo_n  h^k - \data_j \right) }
\end{equation*}
for recovering the sparse  high-frequency factors $\Kx_1 \astx \signal$, $\Kx_2 \astx \signal$. Here $\soft_{s \lambda}  \signal =  \sign( \signal )  \,  \max\{  \abs{\signal}  - s \lambda , 0 \}$ is the soft thresholding operation, $s$ is the step size and $\lambda$ is the regularization parameter.

\item Second we evaluate  $f_{\rm conv} \coloneqq \sum_{j=0}^2  \Kx_j^\ast \astx \signal_j$.

\item
As final reconstruction step we recover an approximation to  $\signal$ by  deconvolution  $f_{\rm conv}$ with kernel  $\Phi = \Fo_d^{-1} \sum_{j= 0 }^2   \abs{ \Fo_d \Kx_j  }^2$. In this work we again use the  iterative soft thresholding algorithm for implementing the deconvolution.
\end{itemize}

\begin{figure}[htb!]
\centering
\includegraphics[width=0.48\textwidth]{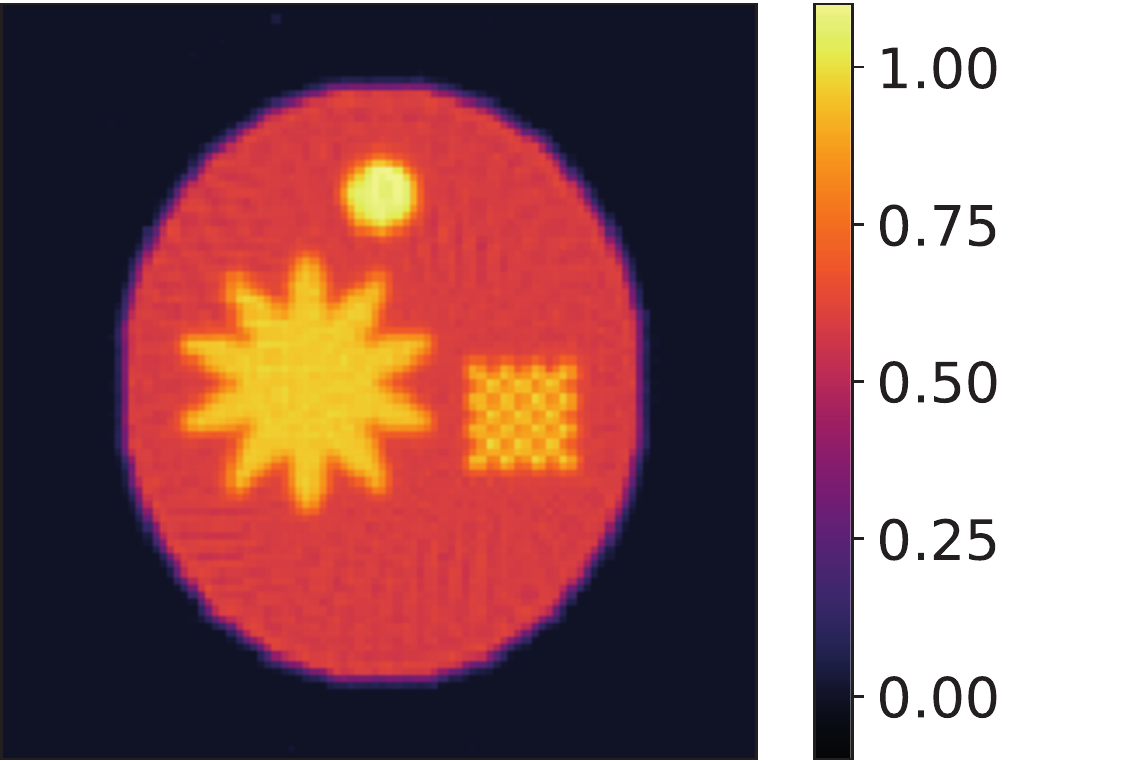}
\includegraphics[width=0.48\textwidth]{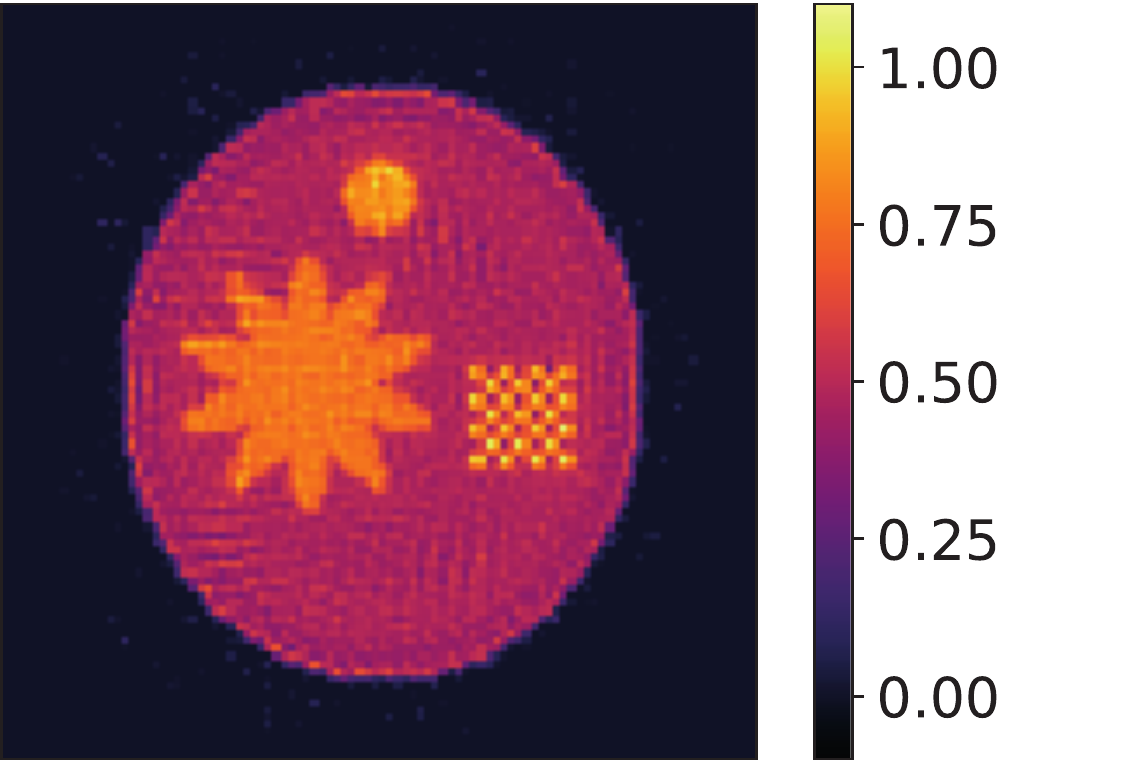}
\\[1em]
\includegraphics[width=0.48\textwidth]{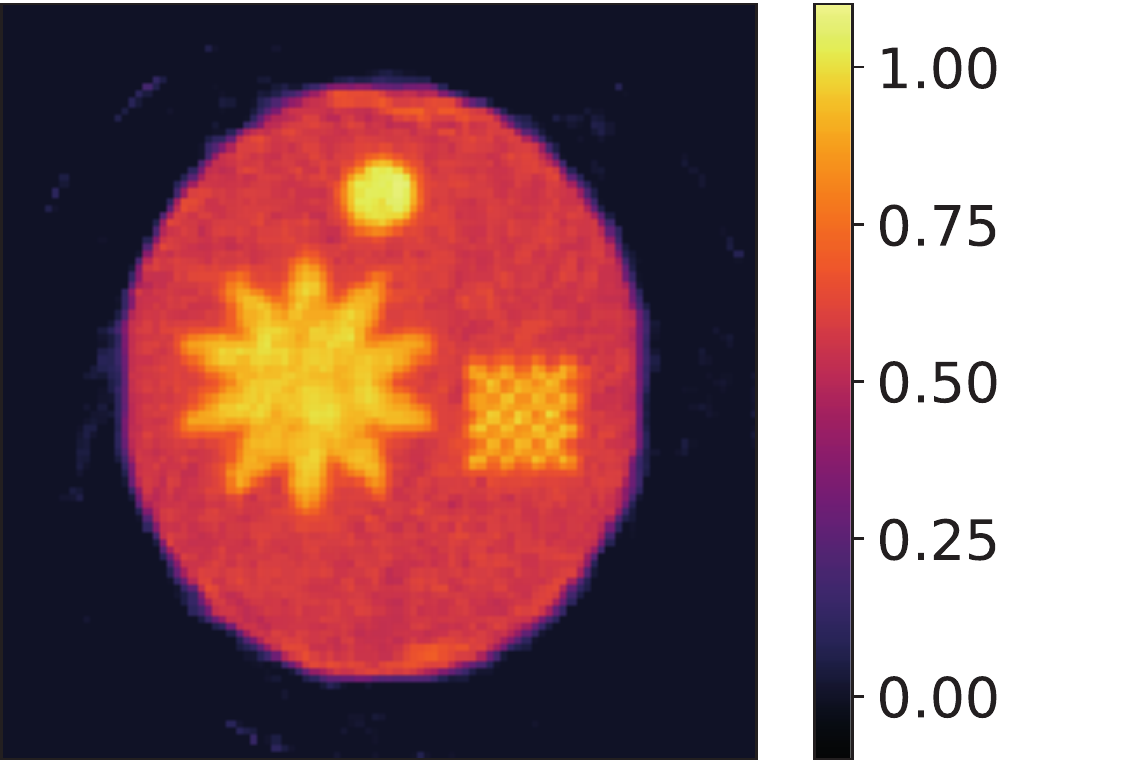}
\includegraphics[width=0.48\textwidth]{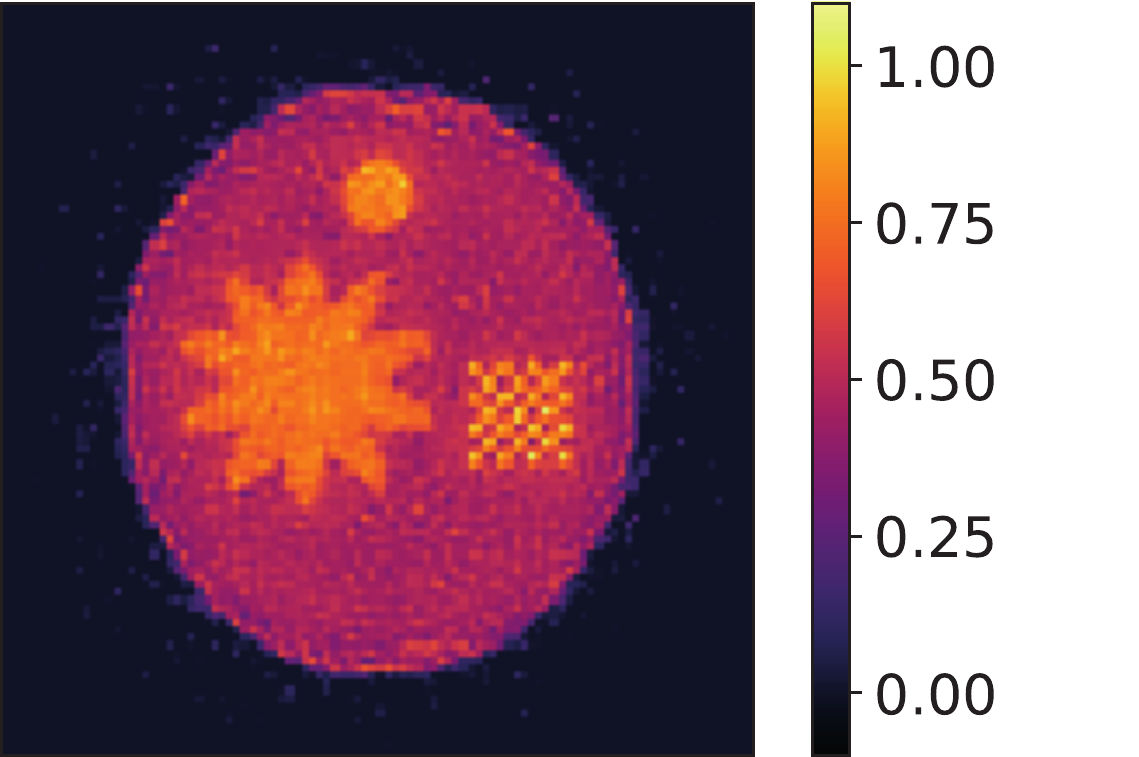}
\caption{Top: Reconstruction from sparse measurements $ \data = \Ao_{m,n} \Wo_n \signal$ with subsampling factor 4 using standard  $\ell^1$-minimization (left) and  the proposed algorithm (right). Bottom: Same for Gaussian measurements.}\label{fig:rec}
\end{figure}

The reconstructions of the convolved phantoms from the undersampled measurements are shown in the right column in Figure~\ref{fig:rec-sparse} and those for the Gaussian measurements are shown in the right column in Figure~\ref{fig:rec-gauss}. The right column in Figure~\ref{fig:rec} shows the resulting reconstructions from undersampled measurements and Gaussian measurements. The left column in Figure~\ref{fig:rec} shows the corresponding reconstructions using standard $\ell^1$-minimization using iterative soft thresholding without the proposed multiscale sparsifying transforms. The relative $\ell^2$ reconstruction errors are $0.17$ (sparse sampling) and $0.19$ (Gaussian measurements) for the proposed method and $0.22$ (both cases) for the standard $\ell^1$-minimization. It is worth noting that the high-resolution pattern is significantly better reconstructed for the proposed multiscale approach than for the standard $\ell^1$-minimization.


\section{Conclusion}
\label{sec:conclusion}

In this paper, we have derived  multiscale factorizations of the wave equation. We have applied the multiscale factorization to CSPAT, where reconstructions are obtained from only a few compressed-sensing measurements consisting of linear combinations of signals recorded by individual detectors. We have presented a novel multiscale reconstruction approach that utilizes the acoustic reciprocity principle to achieve a multiscale decomposition of the desired output pressure through application of a family of operators acting on acoustic data in the time domain. In this way, sparsity of the desired initial pressure distribution is introduced for the high-frequency scales. Our numerical results show that the proposed method improves the reconstructions in the case of compressed sensing measurements.

In future work, we will improve and  analyze the reconstruction algorithm associated to the multiscale factorization. In particular, we analyze the  theoretical  conditions for unique recovery. Other interesting lines of future research is  the extension of the proposed method to PAT with variable sound speed as well as  other tomographic image reconstruction modalities. During  finalization of the  manuscript we found that in the context of Radon inversion with filtered backprojection, related multiscale factorizations  have been proposed in  \cite{peyrin1992multiscale,costin20112d}. The combination of such results with compressed sensing and more advanced reconstruction  techniques seems an interesting line of research.

In future work, we will improve and analyze the reconstruction algorithm associated with multiscale factorization. In particular, we will analyze the theoretical conditions for unique and stable recovery. Other interesting directions of future research is the extension of the proposed method to PAT with variable speed of sound as well as to other tomographic image reconstruction modalities. During the finalization of the manuscript, we found that related multiscale factorizations have been proposed in the context of Radon inversion with filtered backprojection (\cite{peyrin1992multiscale,costin20112d}). Combining such results with compressed sensing and more advanced reconstruction techniques seems to be an interesting direction of further research.

\appendix

\section{Proof of Proposition~\ref{prop:reciD}}
\label{app:reci}

According to   Proposition~\ref{lem:reci} it is sufficient   to show that $\Ro^\sharp  \ky$ is a solution of the equation $\ky = \Ro \Kx$.
Recalling the   definition of $\Ro$ in \eqref{eq:rad} this amounts in showing that $\Ro^\sharp  \ky
= \kx \circ \enorm{} $  satisfies the integral equation  $ \ky(t)   =  \omega_{d-2}  \int_{|t|}^\infty  s \kx(s)  (s^2 - t^2)^{(d-3)/2}  \dd s$ for $t \in \R$. We note that  $(\theta, t) \mapsto \Ro \Kx (t)$ is the Radon transform of the radially symmetric  function $\Kx$. Therefore,  there is  exactly one radial function satisfying the above integral equation.  An explicit expression for the solution has been given in \cite{deans1979gegenbauer}. Using   elementary computation,  a formula has been derived in \cite[p. 23]{natterer1986computerized}. By slight modification we obtain the following results.

 \begin{lemma}\label{lem:natterer}
The  solution $\Kx = \kx  \circ \enorm{} $  of the equation $\ky = \Ro \Kx$ is given by
\begin{equation}\label{eq:natterer}
 \forall r > 0 \colon \quad  \kx(r) \coloneqq   \frac{2 \, (-1)^{d-1}  }{\pi^{(d-1)/2} \Gamma ((d-1)/2)}
\, \Do_r^{d-1}  \int_{r}^{\infty}   (t^2 - r^2)^{(d-3)/2}  \ky (t) \, t \,\dd t \,.
\end{equation}
\end{lemma}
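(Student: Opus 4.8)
The plan is to prove Lemma~\ref{lem:natterer} by reducing the integral equation $\ky = \Ro \Kx$ to a one-dimensional right-sided (Weyl) Abel fractional integral equation and inverting it explicitly. The decisive structural observation is that, writing $\Do_r = \frac{1}{2r}\frac{\dd}{\dd r}$, under the substitution $w = r^2$ one has $\Do_r = \frac{\dd}{\dd w}$, so that $\Do_r^{d-1}$ is nothing but an ordinary $(d-1)$-fold derivative in the squared radial variable. First I would substitute $w = t^2$ and $u = s^2$ (so that $s\,\dd s = \tfrac12\,\dd u$) in the defining relation
\[
\ky(t) = \omega_{d-2}\int_{|t|}^\infty \kx(s)\,(s^2 - t^2)^{(d-3)/2}\, s\,\dd s
\]
and set $F(w) \coloneqq \ky(\sqrt{w})$ and $G(u) \coloneqq \kx(\sqrt{u})$. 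This turns the equation into
\[
F(w) = \frac{\omega_{d-2}}{2}\int_w^\infty G(u)\,(u-w)^{\alpha-1}\,\dd u = \frac{\omega_{d-2}}{2}\,\Gamma(\alpha)\,\kl{I_-^\alpha G}(w),
\]
with $\alpha \coloneqq (d-1)/2$ and $I_-^\alpha$ the normalized right-sided fractional integral of order $\alpha$.

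The second step is to invert this fractional integral. The key is the composition rule that applying the $(d-1)$-fold ordinary derivative $\frac{\dd^{d-1}}{\dd w^{d-1}}$ to the order-$\alpha$ fractional integral of $F$ realizes the order-$\alpha$ fractional derivative of $F$, up to the sign $(-1)^{d-1}$. I would establish this from the elementary formula $\frac{\dd}{\dd w}\int_w^\infty (u-w)^{\beta-1}H(u)\,\dd u = -(\beta-1)\int_w^\infty (u-w)^{\beta-2}H(u)\,\dd u$ (for $\beta>1$, the boundary term at $u=w$ vanishing and the one at $u=\infty$ being controlled by the decay of $\ky$), together with the semigroup identity $I_-^\alpha I_-^\alpha = I_-^{2\alpha} = I_-^{d-1}$. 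Concretely, forming $I_-^\alpha F = \tfrac{\omega_{d-2}}{2}\Gamma(\alpha)\,I_-^{d-1}G$ and then differentiating $d-1$ times collapses the chain to $\frac{\dd^{d-1}}{\dd w^{d-1}} I_-^\alpha F = \tfrac{\omega_{d-2}}{2}\Gamma(\alpha)\,(-1)^{d-1} G$, hence $G = \frac{2(-1)^{d-1}}{\omega_{d-2}\Gamma(\alpha)}\frac{\dd^{d-1}}{\dd w^{d-1}} I_-^\alpha F$. Translating back via $w = r^2$, the inner integral becomes $I_-^\alpha F = \tfrac{2}{\Gamma(\alpha)}\int_r^\infty (t^2-r^2)^{(d-3)/2}\ky(t)\,t\,\dd t$ and the $(d-1)$-fold $w$-derivative becomes $\Do_r^{d-1}$, reproducing exactly the operator structure of \eqref{eq:natterer}.

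Finally I would collect the constants. The inversion yields the prefactor $\frac{4(-1)^{d-1}}{\omega_{d-2}\,\Gamma(\alpha)^2}$ in front of $\Do_r^{d-1}\int_r^\infty (t^2-r^2)^{(d-3)/2}\ky(t)\,t\,\dd t$, and inserting the surface area $\omega_{d-2} = \frac{2\pi^{(d-1)/2}}{\Gamma((d-1)/2)}$ of $\sph^{d-2}$ reduces this to the claimed $\frac{2(-1)^{d-1}}{\pi^{(d-1)/2}\,\Gamma((d-1)/2)}$. Uniqueness of the radial solution is inherited from injectivity of the Radon transform on radial functions, as noted just before the lemma and made explicit in~\cite{deans1979gegenbauer}.

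The step I expect to be the main obstacle is the rigorous justification of the fractional-inversion identity rather than its formal derivation: correctly tracking the cumulative sign $(-1)^{d-1}$ through the $d-1$ differentiations, and discarding the boundary contributions at $u=w$ and $u=\infty$ when differentiating under the integral sign. This is precisely where the decay hypothesis on $\ky$ and the regularity $\ky \circ \sqrt{\abs{\edot}} \in C^{\lceil (d-1)/2\rceil}(\R)$ are used, and it is also where the even/odd parity of $d$ — hidden by the uniform appearance of \eqref{eq:natterer} but visibly split in \eqref{eq:reciD} — is quietly absorbed.
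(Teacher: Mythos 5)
Your proposal is correct, and your constants check out: the prefactor $\frac{4(-1)^{d-1}}{\omega_{d-2}\Gamma(\alpha)^2}$ with $\alpha=(d-1)/2$ and $\omega_{d-2}=2\pi^{(d-1)/2}/\Gamma((d-1)/2)$ does collapse to the constant in \eqref{eq:natterer}. However, your route is genuinely different from the paper's. The paper does not derive the inversion at all: it quotes the identity $\Do_r^{d-1}\int_r^\infty(t^2-r^2)^{(d-3)/2}\ky(t)\,t\,\dd t = 2^{-1}(-1)^{d-1}\omega_{d-2}\,c(d)\,(d-2)!\,\kx(r)$ from \cite[p.~23]{natterer1986computerized}, and its entire proof is Gamma-function bookkeeping, combining $\omega_{d-2}=2\pi^{(d-1)/2}/\Gamma((d-1)/2)$, $c(d)=2^{1-d}\pi^{1/2}\Gamma((d-1)/2)/\Gamma(d/2)$ and the duplication-type identity $\Gamma(d/2)\Gamma((d-1)/2)=2^{2-d}\pi^{1/2}(d-2)!$ to turn the cited constant into the one in \eqref{eq:natterer}. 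You instead prove the cited identity itself: the substitution $w=r^2$ turns $\Do_r$ into $\dd/\dd w$ and the radial Radon transform into a right-sided Weyl fractional integral of order $\alpha$, which is then inverted by composing with $I_-^\alpha$ and differentiating $d-1$ times --- the classical Abel/Weyl mechanism. What your approach buys is self-containedness (it supplies exactly the ``elementary computation'' the paper outsources to Natterer) together with a clean cancellation of constants that avoids the duplication formula altogether, since $\Gamma(\alpha)^2$ cancels directly against $\omega_{d-2}\Gamma(\alpha)$; the price is that you must rigorously justify the Fubini argument behind the semigroup law $I_-^\alpha I_-^\alpha = I_-^{d-1}$ and the repeated differentiation under the integral sign, which you correctly flag as the point where the decay hypothesis on $\ky$ enters. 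Note also that both arguments share the same logical frame --- uniqueness from injectivity of the Radon transform on radial functions, and the formula obtained by inverting an Abel-type equation under the presumption that a solution exists --- so your derivation is not weaker than the paper's on the existence question; it simply replaces a citation by a proof.
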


\begin{proof}
In \cite[p. 23]{natterer1986computerized} the identity
 $\Do_r^{d-1} \int_{r}^{\infty}   (t^2 - r^2)^{(d-3)/2}  \ky(t) t \dd t=2^{-1} (-1)^{d-1}   \omega_{d-2}  c(d)  (d-2)!   \kx(r) $ has been derived with
 $c(d) \coloneqq 2^{1-d} \int_{-1}^{1}   (1 - s^2)^{(d-3)/2}   \dd s $. Together with the  identities
 $\omega_{d-2}  = 2 \pi^{(d-1)/2} / \Gamma((d-1)/2) $ and  $c(d) = 2^{1-d} \pi^{1/2} \Gamma  \kl{(d-1)/2}/\Gamma(d/2) $  as well as $\Gamma\kl{d/2}\Gamma  \kl{(d-1)/2} =  2^{2-d} \pi^{1/2} \, (d-2)!$ this yields the explicit solution formula  \eqref{eq:natterer}.
\end{proof}

It remains to bring the right hand side of Equation~\eqref{eq:natterer} into the desired form. We do this separately for the even-dimensional and the odd-dimensional case.
 \begin{itemize}
\item If $d$ is odd, we have
\begin{align*}
    &   \Do_r^{(d-1)/2} \, \Do_r  \, \Do_r^{(d-3)/2}
    \int_{r}^{\infty}  (t^2 - r^2)^{(d-3)/2}  \ky (t) \, t \, \dd t
\\
  &  \hspace{0.1\textwidth} = (-1)^{(d-3)/2}  \left(  (d-3)/2  \right) !  \, \Do_r^{(d-1)/2}  \, \Do_r
    \int_{r}^{\infty}      \ky(t) \, t\, \dd t
\\
  &  \hspace{0.1\textwidth} = (-1)^{(d-1)/2}  \frac{\left(  (d-3)/2  \right) !}{2}  \, \Do_r^{(d-1)/2}  \, \ky(r)    \,.
\end{align*}
Together with Lemma \ref{lem:natterer}, this gives \eqref{eq:reciD}  for $d$ odd.

\item
If $d$ is even, we first compute
\begin{align}
    & \Do_r \nonumber
    \int_{r}^{\infty}   \frac{\ky(t)}{\sqrt{t^2 - r^2}}   t  \dd t
   \\ \nonumber
   & \hspace{0.1\textwidth}  = \Do_r
    \int_{r}^{\infty} \Bigl( \partial_t  \sqrt{t^2 - r^2} \Bigr)   \phi(t)   \dd t
    \\ \nonumber
    & \hspace{0.1\textwidth}  = -  \Do_r
    \int_{r}^{\infty}   \sqrt{t^2 - r^2} \, \Bigl(  \partial_t  \phi(t) \Bigr)    \dd t
    \\  \nonumber
    &
    \hspace{0.1\textwidth}  =
    \int_{r}^{\infty}  \frac{1}{2}       \frac{1}{\sqrt{t^2 - r^2}}  ( \partial_s  \phi(t) )   \dd t
     \\ \label{eq:comm}
     &
     \hspace{0.1\textwidth}  =
     \int_{r}^{\infty}   \frac{\Do_t \phi(t)}{\sqrt{t^2 - r^2}} t   \dd t    \,.
\end{align}
Therefore
\begin{align*}
    &   \Do_r^{d/2}  \,  \Do_r^{(d-2)/2}
     \int_{r}^{\infty}  (t^2 - r^2)^{(d-3)/2}  \ky (t) \, t \, \dd t
\\
  &  \hspace{0.1\textwidth}= (-1)^{(d-2)/2}  \Gamma((d-1)/2) \, \Do_r^{d/2}
    \int_{r}^{\infty}    (t^2 - r^2)^{-1/2}  \, \ky(t) \, t \, \dd t
        \\
     &  \hspace{0.1\textwidth}=   (-1)^{(d-2)/2}   \Gamma((d-1)/2) \,
    \int_{r}^{\infty}   \frac{ \kl{\frac{1}{2t}  \frac{\partial}{\partial t}}^{d/2} \ky(t)  }{\sqrt{t^2 - r^2}}    \,   t \, \dd t \,,
\end{align*}
where the last equality  follows after  $(d/2)$-times   applying equality  \eqref{eq:comm}.
This gives \eqref{eq:reciD}  for $d$ even.
 \end{itemize}

\end{document}